\newtheorem{theorem}{Theorem}
\newtheorem{lemma}[theorem]{Lemma}
\newtheorem{remark}[theorem]{Remark}
\newtheorem{corollary}[theorem]{Corollary}
\newtheorem{proposition}[theorem]{Proposition}
\numberwithin{theorem}{section} \numberwithin{equation}{section}
\newcommand{\beq}{\begin{small} \begin{equation}}
\newcommand{\eeq}{\end{equation} \end{small}}
\newcommand{\beqn}{\begin{small} \begin{equation*}}
\newcommand{\eeqn}{\end{equation*} \end{small}}
\DeclareMathAlphabet{\mathpzc}{OT1}{pzc}{m}{it}
\newcommand\scalemath[2]{\scalebox{#1}{\mbox{\ensuremath{\displaystyle #2}}}}
\newcommand*{\MyScaleBig}{1.1}
\newcommand*{\MyScaleMed}{0.9}
\newcommand*{\MyScaleSmall}{0.62}
\begin{document}
\title[Elliptic fibrations on the generalized Inose quartic]{Jacobian elliptic fibrations on the generalized Inose quartic of Picard rank sixteen}
\author{Adrian Clingher}
\address{Department of Mathematics and Statistics, University of Missouri -- St. Louis, St. Louis, MO 63121}
\email{clinghera@umsl.edu}
\thanks{A.C. acknowledges support from a UMSL Mid-Career Research Grant.}
\author{Thomas Hill}
\address{Department of Mathematics \& Statistics, Utah State University, Logan, UT 84322}
\email{thomas.hill@usu.edu}
\thanks{T.H. acknowledges the support from the Office of Graduate Studies at Utah State University.}
\author{Andreas Malmendier}
\address{Department of Mathematics \& Statistics, Utah State University, Logan, UT 84322}
\email{andreas.malmendier@usu.edu}
\thanks{A.M. acknowledges support from the Simons Foundation through grant no.~202367.}
\keywords{K3 surfaces, Jacobian elliptic fibrations, Nikulin involutions}
\subjclass[2020]{14J27, 14J28}
\begin{abstract}
We consider the family of complex algebraic K3 surfaces $\mathcal{X}$ with Picard lattice containing the unimodular lattice $H \oplus E_7(-1) \oplus E_7(-1)$.  The surface $\mathcal{X}$ admits a birational model isomorphic to a quartic hypersurface that generalizes the Inose quartic. We prove that a general member of this family admits exactly four inequivalent Jacobian elliptic fibrations and construct explicit pencils for them.
\end{abstract}
\maketitle
\section{Introduction and Summary of results}
Let $\mathcal{X}$ be a smooth complex algebraic K3 surface. Denote by $\operatorname{NS}(\mathcal{X})$ the N\'eron-Severi lattice of $\mathcal{X}$. This is known to be an even lattice of signature $(1,p_\mathcal{X}-1)$, where $p_\mathcal{X}$ being the Picard rank of $\mathcal{X}$, with $1 \leq p_\mathcal{X} \leq 20$.  A {\it lattice polarization} \cites{MR0357410,MR0429917,MR544937,MR525944,MR728992} on $\mathcal{X}$ is, by definition, a primitive lattice embedding $i \colon L \hookrightarrow \operatorname{NS}(\mathcal{X})$, with $i(L)$ containing a pseudo-ample class. Here, $L$ is a choice of even indefinite lattice of signature $(1,\rho_L-1)$, with $ 1 \leq \rho_L \leq 20$. Two $L$-polarized K3 surfaces $(\mathcal{X},i)$ and $(\mathcal{X}',i')$ are said to be isomorphic\footnote{Our definition of isomorphic lattice polarizations coincides with the one used by Vinberg \cites{MR2682724, MR2718942, MR3235787}. It is slightly more general than the one used in \cite{MR1420220}*{Sec.~1}.},  if there exists an analytic isomorphism $\alpha \colon \mathcal{X} \rightarrow \mathcal{X}'$ and a lattice isometry  $ \beta \in O(L)$, such that $ \alpha^* \circ i' = i \circ \beta $, where $\alpha^*$ is the induced morphism at cohomology level. In general, $L$-polarized K3 surfaces are classified, up to isomorphism, by a coarse moduli space $\mathscr{M}_{L}$, which is known  \cite{MR1420220} to be a quasi-projective variety of dimension $20-\rho_L$.   A \emph{general} $L$-polarized K3 surface $(\mathcal{X},i)$ satisfies $i(L)=\operatorname{NS}(\mathcal{X})$.
\par The present paper focuses on a special class of such objects -- K3 surfaces polarized by the rank sixteen lattice:
\beq 
 L \ = \ H \oplus E_7(-1) \oplus E_7(-1) \,.
\eeq 
Here $H$ stands for the unimodular hyperbolic lattice of rank two, and $E_7(-1)$ is the negative definite even lattice associated with the analogous root system.  This notation will be used throughout this article. The following is known about a general $L$-polarized K3 surface $\mathcal{X}$: Nikulin~\cite{MR633160} and Kondo~\cite{MR1139659} proved that i) the automorphism group $\mathrm{Aut}(\mathcal{X})$ is finite, or more precisely, Klein’s group of order 4, and ii) the number of $(-2)$-curves on $\mathcal{X}$  is 19 and their configuration forms a certain dual graph  that we will recall in~(\ref{exdiagg77}).
\par Our interest in this class of K3 surfaces is multi-fold. First, as observed in earlier work \cite{MR4015343} by the authors, K3 surfaces of this type  are  explicitly constructible. In fact, they fit into a six-parameter family of quartic {\it normal forms}: 
\begin{theorem}[\cite{MR4015343}]
\label{thm1_intro}
Let $(\alpha, \beta, \gamma, \delta , \varepsilon, \zeta) \in \mathbb{C}^6 $. Consider the projective quartic surface in $\mathbb{P}^3(\mathbf{X}, \mathbf{Y}, \mathbf{Z}, \mathbf{W})$ defined by the homogeneous equation:   
\beq
\label{quartic1}
\mathbf{Y}^2 \mathbf{Z} \mathbf{W}-4 \mathbf{X}^3 \mathbf{Z}+3 \alpha \mathbf{X} \mathbf{Z} \mathbf{W}^2+\beta \mathbf{Z} \mathbf{W}^3+\gamma \mathbf{X} \mathbf{Z}^2 \mathbf{W}- \frac{1}{2} \left (\delta \mathbf{Z}^2 \mathbf{W}^2+ \zeta \mathbf{W}^4 \right )+ \varepsilon \mathbf{X} \mathbf{W}^3 = 0 \,.
\eeq
Assume that $(\gamma, \delta, \varepsilon, \zeta) \neq 0$. Then, the surface  $\mathcal{X}(\alpha, \beta, \gamma, \delta , \varepsilon, \zeta)$ obtained as the minimal resolution of $(\ref{quartic1})$ is a K3 surface endowed with a canonical  $L$-polarization. Conversely, a general $L$-polarized K3 surface $\mathcal{X}$ has a birational projective model~(\ref{quartic1}).
\end{theorem}
\noindent Equation~(\ref{quartic1}) is a generalization of the \emph{Inose quartic}, a 2-parameter family first introduced by Inose in \cite{MR578868} that provides a  birational model for K3 surfaces with Picard lattice $H \oplus E_8(-1) \oplus E_8(-1)$.
\par As we will show, all $L$-polarized K3 surfaces, up to isomorphism, are in fact realized in this way. Moreover, one can tell precisely when two members of the above family are isomorphic.  Let $\mathpzc{G}$ be the subgroup of $\operatorname{Aut}(\mathbb{C}^6)$ generated by the set of transformations given below:
\beq
 (\alpha,\beta, \gamma, \delta, \varepsilon, \zeta) \ \longrightarrow \
(t^2 \alpha,  \ t^3 \beta,  \ t^5 \gamma,  \ t^6 \delta,  \ t^{-1} \varepsilon,  \ \zeta  ), \ {\rm with} \ t \in \mathbb{C}^* $$
$$ (\alpha,\beta, \gamma, \delta, \varepsilon, \zeta) \ \longrightarrow \  
(\alpha,  \beta,  \varepsilon,  \zeta, \gamma,  \delta  ) \,.
\eeq
It follows then that two K3 surfaces in the above family are isomorphic if and only if their six-parameter coefficient sets belong to the same orbit of $\mathbb{C}^6$ under $\mathpzc{G}$. This fact leads one  to define the following set of invariants associated to the K3 surfaces in the family:
\beq
\label{modinv}
J_2 = \alpha, \ \ \ J_3 = \beta, \ \ \ J_4 =  \gamma \cdot \varepsilon, \ \ \ J_5 = \gamma \cdot \zeta + \delta \cdot \varepsilon, \ \ \  J_6 = \delta \cdot \zeta
\eeq
The results above fit very well with the Hodge theory (periods) classification of these K3 objects. Hodge-theoretically,  lattice polarized K3 surfaces are well understood, by classical work of Dolgachev \cite{MR1420220} or $L$-polarized K3 surfaces, appropriate Torelli type arguments give a Hodge-theoretic coarse moduli space given by the modular four-fold $\Gamma_T^+ \backslash \mathbf{H}_2$, where the period domain $\mathbf{H}_2$ is a four-dimensional open domain of type $I_{2,2} \cong IV_4$ and $ \Gamma_T^+ $ is a discrete arithmetic group acting on $\mathbf{H}_2$; see work by Matsumoto~\cite{MR1204828}. In \cite{MR4015343}, the five invariants of $(\ref{modinv})$ were computed in terms of theta functions on $\mathbf{H}_2$.  Invariants of this type were independently obtained by Vinberg~\cite{MR2682724}.  These results then allows one to prove:
\begin{theorem}[\cite{MR4015343}]
 The four-dimensional open analytic space
\beq
\label{modulispace}
\mathscr{M}_L \ = \ \Big \{ \ 
\left [ \ J_2 , \ J_3 , \  J_4, \ J_5 , \ J_6  \ \right ]   \in  \mathbb{W}\mathbb{P}(2,3,4,5,6) \ \vert  \ 
( J_3 , \;  J_4, \; J_5 ) \neq (0,0,0) \ 
\Big \} \ 
\eeq
forms a coarse moduli space for $L$-polarized K3 surfaces. 
\end{theorem}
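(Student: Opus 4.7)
The plan is to combine Theorem 1.1 and the subsequent characterization of isomorphism classes as $\mathfrak{G}$-orbits with the invariant-theoretic construction (1.5) to identify the orbit space $(\mathbb{C}^6\setminus Z)/\mathfrak{G}$ with $\mathfrak{M}_\mathrm{N}$, where $Z$ is the degenerate locus $\{(\gamma,\delta)=(0,0)\}\cup\{(\varepsilon,\zeta)=(0,0)\}$. By the cited results, isomorphism classes of $\mathrm{N}$-polarized K3 surfaces correspond bijectively to $\mathfrak{G}$-orbits in $\mathbb{C}^6\setminus Z$, so the task reduces to producing a natural bijection between these orbits and the points of $\mathfrak{M}_\mathrm{N}$, then verifying the universal property of a coarse moduli space.

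First I would check that the polynomial map
\[
\Phi\colon (\alpha,\beta,\gamma,\delta,\varepsilon,\zeta) \longmapsto [J_2:J_3:J_4:J_5:J_6]
\]
is $\mathfrak{G}$-invariant. Under the scaling action, a direct computation using (1.5) gives $J_i\mapsto t^i J_i$ for $i=2,\ldots,6$, so the image in $\mathbb{W}\mathbb{P}(2,3,4,5,6)$ is unchanged; under the involution swapping $(\gamma,\delta)$ with $(\varepsilon,\zeta)$, each $J_i$ is manifestly fixed. Next, I would show that $\Phi$ induces a bijection from orbits to $\mathfrak{M}_\mathrm{N}$. For surjectivity, given $[J_2:J_3:J_4:J_5:J_6]\in\mathfrak{M}_\mathrm{N}$, reconstruct a preimage by setting $\alpha=J_2$, $\beta=J_3$ and solving $\gamma\varepsilon=J_4$, $\gamma\zeta+\delta\varepsilon=J_5$, $\delta\zeta=J_6$: the multiset $\{\gamma\zeta,\delta\varepsilon\}$ consists precisely of the roots of $T^2-J_5 T + J_4 J_6=0$, and together with $\gamma\varepsilon=J_4$ and $\delta\zeta=J_6$ determines $(\gamma,\delta,\varepsilon,\zeta)$ up to exactly the residual $\mathbb{C}^*$-scaling and the swap in $\mathfrak{G}$. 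For injectivity, two preimages with the same image agree on $(\alpha,\beta)$ modulo scaling and on the multiset $\{\gamma\zeta,\delta\varepsilon\}$, and the remaining ambiguity is matched by $\mathfrak{G}$.

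The hard part will be the case analysis at the degenerate strata of this reconstruction: when $J_4=0$ one of $\gamma,\varepsilon$ vanishes and the swap may be needed to identify two preimages; when the discriminant $J_5^2-4J_4 J_6$ vanishes, the two roots coincide and one must check that no spurious orbits appear. The open condition $(J_3,J_4,J_5)\neq (0,0,0)$ is precisely what keeps this analysis uniform: in its complement one is forced into the stratum $\beta=\gamma=\varepsilon=0$, where $J_2$ and $J_6$ alone fail to separate the $\mathfrak{G}$-orbits on $(\delta,\zeta)$-space and the weighted projective coordinates lose moduli information, so this pathological stratum must be excluded. Once bijectivity is established, the coarse moduli property follows in the standard way: any analytic family $\mathcal{X}\to S$ of $\mathrm{N}$-polarized K3 surfaces admits local normal-form trivializations, and the $\mathfrak{G}$-invariance of $J_2,\ldots,J_6$ glues these into a canonical holomorphic classifying morphism $S\to\mathfrak{M}_\mathrm{N}$, universal in view of the bijection on geometric points.
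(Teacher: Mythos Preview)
The paper does not prove this theorem here: it is quoted from the authors' earlier paper \cite{clingher2019six}, both in the introduction and again as Theorem~\ref{thm5}, with no argument supplied. So there is no ``paper's own proof'' to compare against; what you have written is your own outline of the invariant-theoretic argument that the cited reference presumably carries out.

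Your outline is the natural one and the verification that the $J_k$ are $\mathfrak{G}$-invariant, together with the quadratic reconstruction of the pair $\{\gamma\zeta,\delta\varepsilon\}$ from $T^{2}-J_{5}T+J_{4}J_{6}$, is correct. But your handling of the excluded locus $(J_{3},J_{4},J_{5})=(0,0,0)$ is not right, and this is where the genuine content lies. You assert that on the stratum $\beta=\gamma=\varepsilon=0$ the invariants $J_{2},J_{6}$ ``fail to separate the $\mathfrak{G}$-orbits''; in fact they do separate them. A short computation shows that on this stratum the $\mathfrak{G}$-orbit of $(\alpha,0,0,\delta,0,\zeta)$ with $\delta,\zeta\neq 0$ is determined exactly by $\alpha^{3}/(\delta\zeta)=J_{2}^{3}/J_{6}$, which is precisely the coordinate on the locus $[J_{2}:0:0:0:J_{6}]\subset\mathbb{W}\mathbb{P}(2,3,4,5,6)$. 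So $\Phi$ remains injective there; it simply lands \emph{outside} $\mathfrak{M}_{\mathrm{N}}$.

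This is the real gap. The tuples $(\alpha,0,0,\delta,0,\zeta)$ with $\delta,\zeta\neq 0$ satisfy the hypotheses $(\gamma,\delta)\neq(0,0)$, $(\varepsilon,\zeta)\neq(0,0)$ of Theorem~1.1, hence yield $\mathrm{N}$-polarized K3 surfaces, yet their image $[J_{2}:0:0:0:J_{6}]$ lies in the complement of $\mathfrak{M}_{\mathrm{N}}$. Conversely, points $[J_{2}:J_{3}:0:0:0]$ with $J_{3}\neq 0$ lie in $\mathfrak{M}_{\mathrm{N}}$ but are \emph{not} hit by $\Phi$, since $J_{4}=J_{5}=J_{6}=0$ forces $(\gamma,\delta)=(0,0)$ or $(\varepsilon,\zeta)=(0,0)$. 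So the naive image of $\Phi$ is $\{(J_{4},J_{5},J_{6})\neq(0,0,0)\}$, not $\mathfrak{M}_{\mathrm{N}}=\{(J_{3},J_{4},J_{5})\neq(0,0,0)\}$. Reconciling this discrepancy---whether through a finer analysis of which quartics actually give $\mathrm{N}$-polarized K3 surfaces on these boundary strata, through the period-map description via $\Gamma_{T}^{+}\backslash\mathbf{H}_{2}$, or through a correction to the stated open condition---is exactly what the proof in \cite{clingher2019six} must address, and your sketch does not.
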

\par In this article, we will determine the Jacobian elliptic fibrations on a general $L$-polarized K3 surface $\mathcal{X}$ and show that their lattice theoretic multiplicities equal one. We then prove that there are exactly four inequivalent Jacobian elliptic fibrations on $\mathcal{X}$.  The uniqueness of one fibration, called the alternate fibration, implies that a general $L$-polarized K3 surface $\mathcal{X}$ has a birational projective model~(\ref{quartic1}), which is unique up to the action of the automorphism group. In Theorem~\ref{thm1}, we show that every Jacobian elliptic fibration on $\mathcal{X}$ is attained on the associated quartic projective surface in Equation~(\ref{quartic1}) as a pencil.  Equations for three types of elliptic fibrations are shown to be induced by pencils of lines. Equations of the fourth type of elliptic fibration is given by a pencil of quadric surfaces.
\par The article is structured as follows: First we note that Jacobian elliptic fibrations on $\mathcal{X}$ are related to primitive lattice embeddings $H \hookrightarrow L$. In Section~\ref{sec:lattice}, a lattice theoretic analysis of this problem reveals  that there are exactly four such (non-isomorphic) primitive lattice embeddings. An $L$-polarized K3 surface carries therefore, up to automorphisms, four special (non-isomorphic) Jacobian elliptic fibrations, the only ones existing in the general case. In the remaining part of the article we then explicitly identify the four fibrations and their corresponding pencils in the context of the normal forms $(\ref{quartic1})$.
\section{Lattice theoretic considerations for the K3 surfaces}
\label{sec:lattice}
Let $\mathcal{X}$ be a general $L$-polarized K3 surface with $L= H \oplus E_7(-1) \oplus E_7(-1)$. We start with a brief lattice-theoretic investigation regarding the possible Jacobian elliptic fibration structures appearing on the surface $\mathcal{X}$. Recall that a \emph{Jacobian elliptic fibration} on $\mathcal{X}$ is a pair $(\pi,\sigma)$ consisting of a proper map of analytic spaces $\pi: \mathcal{X} \to \mathbb{P}^1$, whose general fiber is a smooth curve of genus one, and a section $\sigma: \mathbb{P}^1 \to \mathcal{X}$ in the elliptic fibration $\pi$. If $\sigma'$ is another section of the Jacobian fibration $(\pi,\sigma)$, then there exists an automorphism of $\mathcal{X}$ preserving $\pi$ and mapping $\sigma$  to $\sigma'$. This automorphism can be constructed using the group law of the elliptic fiber. One can then realize an identification between the set of sections of $\pi$ and the group of automorphisms of $\mathcal{X}$ preserving $\pi$. This is the \emph{Mordell-Weil group} $\operatorname{MW}(\pi,\sigma)$ of the Jacobian fibration.  More precisely, this identification is unique up to finite index. In fact, if the $j$-invariant is non-constant then the Mordell-Weil group has index 2 in the group of automorphisms fixing $\pi$ since fiberwise multiplication by $-1$ is an automorphism preserving both $\pi$ and $\sigma$. If the $j$-invariant is constant then this index might be 2, 4 or 6. However, all Jacobian elliptic fibrations considered in this article are not isotrivial.
We have the following:
\begin{lemma}
\label{lem:lattice}
Let $\mathcal{X}$ be a general $L$-polarized K3 surface and $(\pi,\sigma)$ a Jacobian elliptic fibration on $\mathcal{X}$. Then, the Mordell-Weil group has finite order. In particular, we have
\beq
 \operatorname{rank} \operatorname{MW}(\pi,\sigma) = 0 \,.
\eeq
\end{lemma}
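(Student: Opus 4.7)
The plan is to apply the Shioda--Tate formula and reduce the claim to a lattice-theoretic statement about the orthogonal complement of a primitive $H$ inside $\mathrm{N}$.

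Since $\mathcal{X}$ is a generic $\mathrm{N}$-polarized K3 surface, $\operatorname{NS}(\mathcal{X}) = \mathrm{N}$ and hence $\rho(\mathcal{X}) = 16$. A Jacobian elliptic fibration $(\pi, \sigma)$ gives rise to a primitive embedding $\iota : H \hookrightarrow \mathrm{N}$: one sends the two isotropic generators of $H$ to the fiber class $F$ and the adjusted section class $\sigma + F$, so that the Gram matrix matches that of $H$. Since $H$ is unimodular, this embedding splits orthogonally as $\mathrm{N} \cong \iota(H) \oplus W$, where $W := \iota(H)^\perp$ is an even, negative-definite lattice of rank $14$ with the same discriminant form as $\mathrm{N}$, namely $(\mathbb{Z}/2)^2$ with the canonical form inherited from $E_7(-1)^{\oplus 2}$. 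The Shioda--Tate formula then reads
\[
\operatorname{rank} \operatorname{MW}(\pi,\sigma) \;=\; \rho(\mathcal{X}) - 2 - \sum_v (m_v - 1) \;=\; 14 - \operatorname{rank} R,
\]
where $R \subset W$ is the sublattice generated by the $(-2)$-classes of components of reducible fibers disjoint from $\sigma$. The lemma is therefore equivalent to $\operatorname{rank} R = 14$, i.e., to the assertion that $W$ is spanned over $\mathbb{Q}$ by its $(-2)$-vectors.

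To establish this, I would combine the forthcoming enumeration of primitive embeddings $H \hookrightarrow \mathrm{N}$ (via Nikulin's theory of primitive embeddings determined by the discriminant data) with an explicit identification of each orthogonal complement. The enumeration yields the four embeddings announced in the introduction, and in each case $W$ is identifiable as a concrete rank-$14$ negative-definite ADE root lattice --- for the obvious embedding of $H$ as the first summand, $W \cong E_7(-1)^{\oplus 2}$, and the three remaining cases produce comparable direct sums of ADE factors. Since every such rank-$14$ ADE lattice is spanned by its roots, $\operatorname{rank} R = 14$ holds case-by-case.

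The principal obstacle is identifying each of the four complements $W$ as an honest root lattice. It is tempting to note that $\mathrm{N}$ itself contains many roots (at least the $A_1 \oplus E_7 \oplus E_7$ root sublattice), but orthogonal projection of these roots to $W$ does not in general produce roots, so a naive approach fails. What one really needs is Nikulin's gluing description of primitive embeddings in terms of discriminant-form data, followed by a direct inspection that pins down each $W$ as a rank-$14$ ADE lattice. A more conceptual but technically heavier alternative would be a genus argument showing that every lattice in the genus of $E_7(-1)^{\oplus 2}$ is already a root lattice; this would avoid the case analysis but requires a mass formula computation.
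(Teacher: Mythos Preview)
Your approach is correct in principle but takes a genuinely different route from the paper, and there are two issues worth flagging.

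The paper's proof is a one-liner of a different flavor: since $\operatorname{NS}(\mathcal{X})=\mathrm{N}$, the Nikulin--Kondo classification gives $\operatorname{Aut}(\mathcal{X})\simeq\mathbb{Z}_2\times\mathbb{Z}_2$, and as $\operatorname{MW}(\pi,\sigma)$ embeds into $\operatorname{Aut}(\mathcal{X})$ it is finite. No lattice enumeration is needed; the result is imported wholesale from the theory of K3 surfaces with finite automorphism group.

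Your Shioda--Tate reduction to ``$W$ is rationally spanned by roots'' is sound, but the way you propose to verify it is problematic. First, there is a circularity hazard: in the paper, the ``forthcoming enumeration'' of primitive embeddings $H\hookrightarrow\mathrm{N}$ (Lemma~\ref{prop:ADE}) explicitly invokes Lemma~\ref{lem:lattice} to justify restricting to finite $\mathcal{W}$ before consulting Shimada's list. So you cannot simply cite that enumeration; you would have to carry out the genus computation you mention as an alternative, and you do not. Second, your assertion that each of the four $W$'s ``is identifiable as a concrete rank-$14$ ADE root lattice'' is false for one of them: the complement corresponding to the alternate fibration is a proper index-two overlattice of $D_{12}(-1)\oplus A_1(-1)^{\oplus 2}$, not a root lattice itself. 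Its root sublattice still has rank $14$, so your conclusion $\operatorname{rank} R=14$ survives, but the statement as written is wrong and would need repair.

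In short: the paper buys brevity by outsourcing the work to Nikulin--Kondo; your route is more self-contained in spirit but, as written, leans on a classification that in this paper depends on the very lemma you are proving, and misdescribes one of the four complements.
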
 
\begin{proof}
For $\operatorname{NS}(\mathcal{X})=L$, it follows, via work of Nikulin \cites{MR544937,MR633160,MR752938} and Kondo \cite{MR1029967}, that the group of automorphisms of $\mathcal{X}$ is finite. In fact, $\operatorname{Aut}(\mathcal{X}) \simeq \mathbb{Z}_2 \times \mathbb{Z}_2$.  In particular, any Jacobian elliptic fibration on $\mathcal{X}$ must have a Mordell-Weil group of finite order and cannot admit any infinite-order section.
\end{proof}
Given a Jacobian elliptic fibration $(\pi,\sigma)$ on $\mathcal{X}$, the classes of fiber and section span a rank-two primitive sub-lattice of $\operatorname{NS}(\mathcal{X})$ which is isomorphic to the standard rank-two hyperbolic lattice $H$. The converse also holds: given a primitive lattice embedding $ j \colon H \hookrightarrow \operatorname{NS}(\mathcal{X})$, whose image $j(H)$ contains a pseudo-ample class, it is known (see  \cite{MR2355598}*{Thm.~2.3}) that there exists a Jacobian elliptic fibration on the surface $\mathcal{X}$, whose fiber and section classes span $j(H)$. Moreover, one has a one-to-one correspondence between classes of Jacobian elliptic fibrations on $\mathcal{X}$, up to automorphisms of $\mathcal{X}$, and classes of primitive lattice embeddings $H \hookrightarrow \operatorname{NS}(\mathcal{X})$, up to the action of isometries of $H^2(\mathcal{X}, \mathbb{Z})$ preserving the Hodge decomposition (see \cite{MR2369941}*{Lemma~3.8}). 
\par Assume $j \colon H \hookrightarrow L$ is a primitive embedding. Denote by $K $ the orthogonal complement of $j(H)$ in $L$. It follows that $L = j(H) \oplus K$. Moreover, the lattice $K$ is negative-definite of rank fourteen and its discriminant group must be of the form
\beq
\label{discr1}
 \Big( D( K ) , \; q_K  \Big) \ \simeq \ \Big( D(L), \; q_L \Big) \ 
  \simeq \ \Big( \mathbb{Z}_2 \oplus \mathbb{Z}_2 , \ (3/2) \oplus (3/2)  \Big)  \,.
\eeq
Denote by $K^{\text{root}}$ the sub-lattice spanned by the roots of $K$, i.e., the lattice elements of self-intersection $-2$  in $K$.  Let $\Sigma \subset \mathbb{P}^1$ be the set of points on the base of the elliptic fibration $\pi$ that correspond to singular fibers.  For each singular point $p \in \Sigma$, we denote by $\mathrm{T}_p$ the sub-lattice spanned by the classes of the irreducible components of the singular fiber over $p$ that are disjoint from the section $\sigma$ of the elliptic fibration. Standard K3 geometry arguments tell us that $K^{\text{root}}$ is of ADE-type, meaning for each $p \in \Sigma$ the lattice $\mathrm{T}_p$ is a negative definite lattice of type $A_m$, $D_m$ and $E_l$, and we have
\beq
 K^{\text{root}} = \bigoplus_{p \in \Sigma}  \mathrm{T}_p \,.
\eeq 
Another item relevant to our discussion is the factor group:
\beq
   \mathpzc{W} = K / K^{\text{root}}  \,.
\eeq
Shioda \cite{MR1081832} proved that there is a canonical group isomorphism $ \mathpzc{W} \simeq \operatorname{MW}(\pi,\sigma)$, identifying $ \mathpzc{W}$ with the Mordell-Weil group of the corresponding Jacobian elliptic fibration $(\pi,\sigma)$. 
\par As a first step in our classification of Jacobian elliptic fibrations on $\mathcal{X}$, we have: 
\begin{lemma}
\label{prop:ADE}
In the above context, one has $ \vert  \mathpzc{W} \vert \leqslant 2$ and the possible choices for $K^{\text{root}} $ are as follows:
\begin{itemize}
\item [(a)] If $ \  \mathpzc{W} = \{ \mathbb{I} \} $, then $ K^{\text{root}} $ is isomorphic to either 
$$E_7(-1) \oplus E_7(-1), \ \ E_8(-1) \oplus D_6(-1) , \ {\rm or} \ D_{14}(-1) $$ 
\item [(b)] If  $ \  \mathpzc{W} =\mathbb{Z}_2 $, then $K^{\text{root}} =D_{12}(-1) \oplus A_1(-1) \oplus A_1(-1)$ 
\end{itemize} 
\end{lemma}
\begin{proof}
A classification of  elliptic fibrations on K3 surfaces with 2-elementary N\'eron-Severi lattice was given in \cites{MR4130832, MR3201823}.  Based on Nikulin's classification \cite{MR633160b} and Shimada's result \cite{MR1813537}, a purely lattice theoretic classification of Jacobian elliptic fibrations with finite automorphism group was given in \cite{Clingher:2021}. Restricting to Picard number 16, the result follows.
\end{proof}
\par Let us also investigate the possible distinct primitive lattice embeddings $H \hookrightarrow L$. We shall employ the point of view  of \cite{FestiVeniani20}. Assume that $\operatorname{NS}(\mathcal{X})=L$ and $j' \colon H \hookrightarrow L$ is a second primitive embedding, such that the orthogonal complement of the image $j'(H)$, denoted $K'$, is isomorphic to the lattice $K$ above. We would like to see under what conditions $j$ and $j'$ correspond to Jacobian elliptic fibrations isomorphic under $\mathrm{Aut}(\mathcal{X}) $. By standard lattice-theoretic arguments (see  \cite{MR525944}*{Prop.~ 1.15.1}), there will exist an isometry $ \gamma \in \mathcal{O}(L) $ such that $j' = \gamma \circ j$. The isometry $\gamma$ has a counterpart $ \gamma^* \in \mathcal{O}(D(K)) $ obtained as image of $\gamma$ under the group homomorphism
\beq 
\label{izo22}
\mathcal{O}(L) \ \rightarrow \  
\mathcal{O}(D(L)) \ \simeq \ \mathcal{O}(D(K)) \,.
\eeq
The isomorphism in (\ref{izo22}) is due to the decomposition $L = j(H) \oplus K$ and, as such, it depends on the lattice embedding $j$. 
\par Denote the group $\mathcal{O}(D(K)) $ by $\mathcal{A}$. There are two subgroups of $\mathcal{A}$ that are relevant to our discussion. The first subgroup $\mathcal{B} \leqslant \mathcal{A}$  is given as the image of the following group homomorphism:
\beq
\mathcal{O}(K) \ \simeq \  
\left \{ 
\varphi \in \mathcal{O}(L) \ \vert \  \varphi \circ j(H) = j(H) 
\right \} 
 \ \hookrightarrow \ \mathcal{O}(L) \ \rightarrow \ \mathcal{O}(D(L)) \ \simeq \ \mathcal{O}(D(K))  \ .
\eeq
The second subgroup $\mathcal{C} \leqslant \mathcal{A}$ is obtained as the image of following group homomorphism:
\beq
\mathcal{O}_h(\mathrm{T}_{\mathcal{X}}) \ \hookrightarrow \  \mathcal{O}(\mathrm{T}_{\mathcal{X}}) 
\ \rightarrow \ \mathcal{O} ( D ( \mathrm{T}_{\mathcal{X}} ) )  \ \simeq \ 
 \mathcal{O}(D(L)) \ \simeq \ \mathcal{O}(D(K))  \ .
\eeq
Here $\mathrm{T}_{\mathcal{X}} $ denotes the transcendental lattice of the K3 surface $\mathcal{X}$ and $\mathcal{O}_h(\mathrm{T}_{\mathcal{X}})$ is given by the isometries of 
$\mathrm{T}_{\mathcal{X}} $ that preserve the Hodge decomposition. Furthermore, one has $ D(L) \simeq D(\mathrm{T}_{\mathcal{X}})  $  with $q_{L} = -q_{\mathrm{T}_{\mathcal{X}}} $, as $\operatorname{NS}(\mathcal{X})=L$ and $\mathrm{T}_{\mathcal{X}}$ is the orthogonal complement of $\operatorname{NS}(\mathcal{X}) $ with respect to an unimodular lattice. 
\par Consider then the correspondence
\beq
\label{corresp33} 
 H \stackrel{j}{\hookrightarrow} L  \quad  \rightsquigarrow \quad \mathcal{C} \gamma^* \mathcal{B} \,,
 \eeq
 that associates to a lattice embedding $H \hookrightarrow L$ a double-coset in $\mathcal{C} \backslash \mathcal{A}/  \mathcal{B}$. As proved in  \cite{FestiVeniani20}*{Thm~2.8},   the map (\ref{corresp33}) establishes a one-to-one correspondence between Jacobian elliptic fibrations on $\mathcal{X}$ with $ j(H)^{\perp} \simeq K $, up to the action of the automorphism group ${\rm Aut}(\mathcal{X}) $ and the elements of the double-coset set $\mathcal{C} \backslash \mathcal{A}/  \mathcal{B}$. The number of elements in $\mathcal{C} \backslash \mathcal{A}/  \mathcal{B}$ is referred by Festi and Veniani as the {\it multiplicity} of the lattice $K$.  
Following up on Lemma~\ref{prop:ADE}, one then obtains:
\begin{proposition} 
\label{cor:EFS} 
\par \noindent
\begin{itemize}
\item [(a)] Up to a lattice isomorphism, there are exactly four rank-fourteen negative-definite lattices $K$, satisfying condition~(\ref{discr1}). We list those as 
$K_i, \ 1 \leq i \leq 4 $, where:
$$  K_1  = E_7(-1) \oplus E_7(-1) \,, \quad   K_2 = E_8(-1) \oplus D_6(-1) \,, \quad   K_3 = D_{14}(-1) \ , $$
while $ K_4 $ is an over-lattice of $D_{12}(-1) \oplus A_1(-1) \oplus A_1(-1)$, of index two. 
\item [(b)] For each of the four possible choices in (a), the multiplicity of $K_i$ is $1$.    
\end{itemize} 
\end{proposition}
\begin{proof}
The statement is proven by a computation using the Sage class {\tt QuadraticForm}.  For a given lattice {\tt K}, the discriminant group is computed using the command {\tt D = K.discriminant\char`_group()}. The automorphism groups are computed using
\beqn
\text{{\tt O=K.orthogonal\char`_group()} and {\tt C=D.orthogonal\char`_group()}.}
\eeqn
Images of the generators are computed as {\tt A=D.orthogonal\char`_group(O.gens())}.
\par For $\mathpzc{W}=\mathbb{Z}/2\mathbb{Z}$  the lattice $K_4$ is the overlattice spanned by $K^{\text{root}}$ and one additional lattice vector $\vec{v}_{\text{max}}$. The vector $\vec{v}_{\text{max}}$ is in the orthogonal complement of the sublattice spanned by the section and the smooth fiber class. The lattice vector $\vec{v}_{\text{max}}$ can be computed using the properties of the elliptic fibration alone. Using the same notation for the bases of root lattices as in \cite{MR1813537}*{Sec.~6} and ordering the bases by ADE-type and from lowest to highest rank, one obtains
\beqn
  \vec{v}_{\text{max}} = \frac{1}{2} \, \langle  1, 1  \, | \, 1, 0,  1, 0, 1, 0, 1, 0, 1, 0, 1 , 0 \rangle \,.
\eeqn
The overlattice is computed using the command {\tt overlattice}. The corresponding Gram matrix is then computed using the command {\tt gram\char`_matrix}.  
\par  The multiplicity associated with each $(K^{\text{root}}, \mathpzc{W})$ is then shown to equal one by checking that the images of the generators for $O(K)$ also generate $O(D(K))$.
\end{proof}
\noindent Proposition \ref{cor:EFS} implies the following corollaries:
\begin{corollary}
\label{cor:ellipt}
A general $L$-polarized K3 surface $\mathcal{X}$ admits exactly four inequivalent Jacobian elliptic fibrations $(\pi,\sigma)$, which are unique up to the action of the automorphism group ${\rm Aut}(\mathcal{X})$.
\end{corollary}
\begin{corollary}
\label{cor:equiv}
One has four isometric manifestations of $L$:
\beq
\label{eqn:2elementary_lattices}
H \oplus E_7(-1) \oplus E_7(-1)  \ \cong \ 
  \ H \oplus E_8(-1)  \oplus D_6(-1) \ \cong \  \ H \oplus D_{14}(-1) \ \cong \ H \oplus K_4 \,.
\eeq
\end{corollary}
\section{Generalized Inose quartic and its elliptic fibrations}
\label{sec:geometry}
In \cites{MR2369941,MR2279280} it was proved that  a complex algebraic K3 surface $\mathcal{X}$ with Picard lattice $H \oplus E_8(-1) \oplus E_8(-1)$ admits a birational model isomorphic to the quartic surface in $\mathbb{P}^3=\mathbb{P}(\mathbf{X}, \mathbf{Y}, \mathbf{Z}, \mathbf{W})$ with equation
\beqn
 0= \  \mathbf{Y}^2 \mathbf{Z} \mathbf{W}-4 \mathbf{X}^3 \mathbf{Z}+3 \alpha \mathbf{X} \mathbf{Z} \mathbf{W}^2+ \beta \mathbf{Z} \mathbf{W}^3 
-   \frac{1}{2} \big( \mathbf{Z}^2  \mathbf{W}^2 +  \mathbf{W}^4 \big) .
\eeqn
The 2-parameter family was first introduced by Inose in \cite{MR578868} and is called \emph{Inose quartic}. Other examples of equations relating the elliptic fibrations of K3 surfaces with 2-elementary N\'eron-Severi lattice and quartic hypersurfaces in $\mathbb{P}^3$ were provided in \cites{MR4130832, MR3882710}. We will consider a multi-parameter generalizations of the Inose quartic.
\par Let $(\alpha, \beta, \gamma, \delta , \varepsilon, \zeta) \in \mathbb{C}^6 $ be a set of parameters. We consider the projective quartic surface $ \mathcal{Q}(\alpha, \beta, \gamma, \delta , \varepsilon, \zeta)$ in $\mathbb{P}^3$ defined by the homogeneous equation
\beq
\label{mainquartic}
\mathbf{Y}^2 \mathbf{Z} \mathbf{W}-4 \mathbf{X}^3 \mathbf{Z}+3 \alpha \mathbf{X} \mathbf{Z} \mathbf{W}^2+\beta \mathbf{Z} \mathbf{W}^3+\gamma \mathbf{X} \mathbf{Z}^2 \mathbf{W}- \frac{1}{2} \left (\delta \mathbf{Z}^2 \mathbf{W}^2+ \zeta \mathbf{W}^4 \right )+ \varepsilon \mathbf{X} \mathbf{W}^3 = 0.
\eeq
The family~(\ref{mainquartic}) was first introduced by the first author and Doran in \cite{MR2824841}. We denote by $\mathcal{X}(\alpha, \beta, \gamma, \delta , \varepsilon, \zeta)$ the smooth complex surface obtained as the minimal resolution of  $\mathcal{Q}(\alpha, \beta, \gamma, \delta , \varepsilon, \zeta)$. The quartic $\mathcal{Q}(\alpha, \beta, \gamma, \delta , \varepsilon, \zeta)$ has two special singularities at the following points:
\beq
  \mathrm{P}_1 = [0,1,0,0], \ \ \ \ \mathrm{P}_2 = [0,0,1,0] \,.
\eeq  
One verifies that the singularity at $\mathrm{P}_1$ is a rational double point of type $A_9$  if $ \varepsilon \neq 0$, and of type $A_{11}$ if $ \varepsilon = 0$. The singularity at $\mathrm{P}_2$ is of type $A_5$  if $ \gamma  \neq 0$, and of type $E_6$ if $ \gamma  = 0$. One easily checks that for $ (\gamma, \delta , \varepsilon, \zeta) \neq 0$,  the points $\mathrm{P}_1$ and $\mathrm{P}_2$ are the only singularities of Equation~(\ref{mainquartic}) and are rational double points. The two sets $ a_1, a_2, \dots, a_9$ and $b_1, b_2, \dots, b_5$ will denote the curves appearing from resolving the rational double  point singularities at $\mathrm{P}_1$ and $\mathrm{P}_2$, respectively. 
\par In this section we assume that $\gamma \varepsilon \not =0$. The specializations $\gamma=0$ and $\varepsilon=0$ were already considered in \cite{MR4015343}. We introduce the following three special lines, denoted by $ L_1 $, $ L_2 $, $L_3$ and given by
\beqn
 \mathbf{X}=\mathbf{W}=0 \,, \quad \mathbf{Z}=\mathbf{W}=0\,, \quad 2\varepsilon \mathbf{X}-\zeta \mathbf{W} = \mathbf{Z} = 0 \,. 
\eeqn 
Note that $ L_1 $, $ L_2 $, $L_3$ lie on the quartic in Equation~(\ref{mainquartic}). Because of $ \gamma \varepsilon \neq 0 $, the lines $ L_1 $, $ L_2 $, $L_3$  are distinct and concurrent, meeting at $\mathrm{P}_1$. Moreover, we consider the following complete intersections:
\beqn
 2\varepsilon \mathbf{X}-\zeta \mathbf{W}  \ = \  
\left (3 \alpha \varepsilon ^2 \zeta + 2 \beta \varepsilon ^3 - \zeta^3 \right ) \mathbf{W}^2 -  
\varepsilon^2 \left ( \delta \varepsilon-\gamma \zeta \right ) \mathbf{Z} \mathbf{W} 
+ 2 \varepsilon^3 \mathbf{Y}^2 \ = \  0 \,,
\eeqn
\beqn
2\gamma \mathbf{X}-\delta \mathbf{W}  \ = \  
\left (3 \alpha \gamma ^2 \delta + 2 \beta \gamma ^3 - \delta^3 \right ) \mathbf{Z} \mathbf{W}^2 -  
\gamma^2 \left ( \gamma \zeta - \delta \varepsilon \right ) \mathbf{W}^3 
+ 2 \gamma^3 \mathbf{Y}^2 \mathbf{Z} \ = \  0 \, .
\eeqn
Assuming appropriate generic conditions, the above equations determine two projective curves $R_1$, $R_2$, of degrees two and three, respectively. 
The conic $R_1$ is a (generically smooth) rational curve tangent to $L_1$ at $\mathrm{P}_2$. The cubic $R_2$ has 
a double point at $\mathrm{P}_2$, passes through $\mathrm{P}_1$ and is generically irreducible. When resolving the quartic surface $(\ref{mainquartic})$, 
these two curves lift to smooth rational curves on $\mathcal{X}(\alpha, \beta, \gamma, \delta , \varepsilon, \zeta)$, which by a slight 
abuse of notation we shall denote by the same symbol.  
\par We define the \emph{dual graph} of smooth rational curves to be the simplicial complex whose set of vertices is the set of smooth rational curves on a K3 surface such that the vertices $\Sigma, \Sigma'$ are joint by an $m$-fold edge if and only if their intersection product is $\Sigma  \cdot \Sigma' = m$.  For Picard rank bigger than or equal to $15$, the possible dual graphs of all smooth rational curves on K3 surfaces with finite automorphism groups were determined in  \cite{MR556762}.  Here, we use the dual graph for $H \oplus E_8(-1) \oplus D_6(-1)$-polarized surfaces  in  \cite{MR556762}*{Sec.~\!4} and the fact that, due to Corollary~\ref{cor:equiv}, it is identical with the graph for an $H \oplus E_7(-1) \oplus E_7(-1)$-polarization. The dual diagram of rational curves is given by the following graph:
\begin{equation}
\label{exdiagg77}
\def\objectstyle{\scriptstyle}
\def\labelstyle{\scriptstyle}
\scalemath{\MyScaleBig}{
\xymatrix @-0.9pc  {
& & \stackrel{L_3}{\bullet} \ar @{=}[rrrrrrrrrr]& &  & & &    & & & & & \stackrel{R_1}{\bullet} &  &  \\ 
 & &  & &  & & & &   &    & &  & & \\
& \stackrel{a_1}{\bullet} \ar @{-} [r] \ar @{-}[ddr] \ar @{-}[uur] &
\stackrel{a_2}{\bullet} \ar @{-} [r]  &
\stackrel{a_3}{\bullet} \ar @{-} [r]  \ar @{-} [d] &
\stackrel{a_4}{\bullet} \ar @{-} [r] &
\stackrel{a_5}{\bullet} \ar @{-} [r] &
\stackrel{a_6}{\bullet} \ar @{-} [r] &
\stackrel{a_7}{\bullet} \ar @{-} [r] &
\stackrel{a_8}{\bullet} \ar @{-} [r] &
\stackrel{a_9}{\bullet} \ar @{-} [r] &
\stackrel{L_1}{\bullet} \ar @{-} [r] &
\stackrel{b_2}{\bullet} \ar @{-} [r] \ar @{-} [d] &
\stackrel{b_3}{\bullet} & \stackrel{b_4}{\bullet} \ar @{-} [l] \ar @{-}[ddl] \ar @{-}[uul]
 & \\
 & &  & \stackrel{L_2}{\bullet} &  & & & &   &    & & \stackrel{b_1}{\bullet}  & & \\
 & & \stackrel{R_2}{\bullet} \ar @{=}[rrrrrrrrrr]& &  & & &    & & & & &   \stackrel{b_5}{\bullet} & & \\
} }
\end{equation}
We proved the following theorem in \cite{MR4015343}:
\begin{theorem}
\label{thm:polarization}
Assume that $(\gamma, \delta) \neq (0,0)$ and $ (\varepsilon, \zeta) \neq (0,0)$. Then, the surface $\mathcal{X}(\alpha, \beta, \gamma, \delta , \varepsilon, \zeta)$ obtained as the minimal resolution of $\mathcal{Q}(\alpha, \beta, \gamma, \delta , \varepsilon, \zeta)$ is a K3 surface endowed with a canonical $L$-polarization. 
\end{theorem}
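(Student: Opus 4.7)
The plan is to split the argument into two tasks: (i)~verify that $\mathcal{X}(\alpha,\beta,\gamma,\delta,\varepsilon,\zeta)$ is a K3 surface, and (ii)~construct a primitive lattice embedding $\mathrm{N}\hookrightarrow\operatorname{NS}(\mathcal{X})$ whose image contains a pseudo-ample class.

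For (i), I would begin with a local analysis of the quartic $\mathrm{Q}$ at the two distinguished points $\mathrm{P}_1, \mathrm{P}_2$. In affine charts centered at each, the assumptions $(\gamma,\delta)\neq(0,0)$ and $(\varepsilon,\zeta)\neq(0,0)$ determine the leading terms of the local equations and let me read off the Newton polygons. A direct computation confirms the stated ADE types ($\mathrm{A}_9$ or $\mathrm{A}_{11}$ at $\mathrm{P}_1$ depending on whether $\varepsilon\neq 0$, and $\mathrm{A}_5$ or $\mathrm{E}_6$ at $\mathrm{P}_2$ depending on $\gamma$). A Bertini/Jacobian argument on the complement then shows that for generic parameters these are the only singularities. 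Since rational double points are canonical (crepant), the minimal resolution $\mathcal{X}$ inherits a trivial canonical bundle and simple connectedness from a smooth quartic in $\mathbb{P}^3$, hence is a K3 surface. For non-generic parameter values within the allowed locus, one checks that any additional singularities acquired are again of ADE type, so $\mathcal{X}$ remains K3.

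For (ii), the idea is to read the lattice $\mathrm{N}$ off the dual graph~(\ref{exdiagg77}) of rational curves on $\mathcal{X}$. I would take as candidate generators the exceptional divisors $a_1,\dots,a_9$ and $b_1,\dots,b_5$, together with the strict transforms of $\mathrm{L}_1, \mathrm{L}_2, \mathrm{L}_3, \mathrm{R}_1, \mathrm{R}_2$ and the hyperplane class $h$ pulled back from $\mathbb{P}^3$. Adjunction and~(\ref{exdiagg77}) give the intersection numbers: each $a_i, b_j, \mathrm{L}_k, \mathrm{R}_\ell$ is a smooth $(-2)$-curve, adjacent nodes meet transversally, the double edges record tangency (intersection number $2$), and $h^2=4$ with $h\cdot(\text{exceptional})=0$ and $h\cdot \mathrm{L}_k=1$. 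From the left half of the graph one assembles a sublattice of type $E_7(-1)$ generated by an appropriate choice among $a_1,a_2,a_3, \mathrm{L}_2, \mathrm{L}_3, \mathrm{R}_2$ (the triple node $a_1$ plays the role of the trivalent vertex of the $E_7$ Dynkin diagram), and symmetrically on the right half one obtains a second $E_7(-1)$. The orthogonal hyperbolic plane $H$ is then spanned by $\mathrm{L}_1$ together with a suitable combination of $h$ with the components of reducible hyperplane sections through $\mathrm{P}_1$ and $\mathrm{P}_2$.

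Two things remain: primitivity and the existence of a pseudo-ample class in the image. Primitivity reduces to a discriminant comparison: any overlattice of $\mathrm{N}$ inside $\operatorname{NS}(\mathcal{X})$ has finite index dividing $\sqrt{|D(\mathrm{N})|}=2$, and one rules out index two by showing that none of the half-integer glue vectors in $\tfrac{1}{2}\mathrm{N}^*$ can be realized by an effective divisor class compatible with the intersection pattern. For pseudo-ampleness it suffices to exhibit $h$ itself, which lies in the constructed sublattice and satisfies $h^2=4>0$ and $h\cdot C\geq 0$ on every effective curve of the diagram. The main obstacle I expect is in step~(ii): among the $17$ curves listed in~(\ref{exdiagg77}) there are three independent linear relations (coming from hyperplane sections through each singular point decomposing as unions of components), and one must select integer combinations so that the resulting $16\times 16$ Gram matrix is \emph{exactly} $H\oplus E_7(-1)\oplus E_7(-1)$ rather than merely finite-index related to it. Identifying the right $E_7$ nodes inside the ambient $\mathrm{A}_9{+}\mathrm{L}_2{+}\mathrm{L}_3{+}\mathrm{R}_2$ configuration (and similarly on the right) is the combinatorial heart of the argument.
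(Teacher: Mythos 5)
Your overall strategy is the one the paper intends (the proof is actually deferred to \cite{clingher2019six}, but Section~\ref{sec:geometry} lays out exactly this construction): resolve the two rational double points, assemble the dual graph~(\ref{exdiagg77}) of the exceptional curves together with $\mathrm{L}_1,\mathrm{L}_2,\mathrm{L}_3,\mathrm{R}_1,\mathrm{R}_2$, and extract $H\oplus E_7(-1)\oplus E_7(-1)$ together with the pseudo-ample class $\mathcal{H}$ of~(\ref{linepolariz}). Part (i) of your argument is fine. The concrete problem is in your identification of the first $E_7(-1)$: the set $\{a_1,a_2,a_3,\mathrm{L}_2,\mathrm{L}_3,\mathrm{R}_2\}$ has only six elements, so no ``appropriate choice among'' them can span a rank-seven lattice; moreover, in the graph~(\ref{exdiagg77}) these six curves form a tree with trivalent vertex $a_1$ and arms of lengths $1,1,3$, i.e.\ a $D_6$ diagram, not (part of) an $E_7$. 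The correct choice, visible in the paper's standard fibration (Section~\ref{ssec:std}), is $\{\mathrm{L}_3,a_1,a_2,a_3,\mathrm{L}_2,a_4,a_5\}$ with $a_3$ as the trivalent node, and symmetrically $\{b_5,b_4,b_3,b_2,b_1,\mathrm{L}_1,a_9\}$ with trivalent node $b_2$; the hyperbolic summand is then spanned by the associated fiber class and the section $a_7$. Relatedly, your count of seventeen curves and three relations is off: the diagram contains nineteen curves, which together with the hyperplane class span the rank-sixteen lattice.

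Two smaller points. First, your primitivity step can be short-circuited: since $q_{\mathrm{N}}$ on $D(\mathrm{N})\simeq\mathbb{Z}_2\oplus\mathbb{Z}_2$ takes the values $1/2,\,1/2,\,1$ on the nonzero elements, it has no nontrivial isotropic subgroup, hence $\mathrm{N}$ admits no proper even overlattice and any embedding into the even lattice $\operatorname{NS}(\mathcal{X})$ is automatically primitive; there is no need to analyze glue vectors against effective classes. Second, the hypotheses allow $\gamma=0$ or $\varepsilon=0$, in which case the singularities degenerate to $\mathrm{A}_{11}$ and $\mathrm{E}_6$ and the curve configuration changes; you address the K3 property there but not the lattice polarization, which requires a separate (if analogous) analysis of the modified diagram, as is done in \cite{clingher2019six}.
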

\par A simple computation shows:
\begin{lemma}
The degree-four polarization determined on $\mathcal{X}(\alpha, \beta, \gamma, \delta , \varepsilon, \zeta)$, assuming the case  
$ \gamma \varepsilon \neq 0 $, is given by the following polarizing divisor
\beq
\label{linepolariz}
\mathcal{H} =  L_2 + 
\left ( 
a_1+2a_2+3a_3+3a_4+3a_5+\dots + 3a_9 
\right ) 
+ 3 L_1 + 
\left ( 
2b_1+4b_2+3b_3+2b_4+b_5
\right ).
\eeq
\end{lemma}
\par The following lemma was proved in \cite{MR4015343}:
\begin{lemma}
\label{symmetries1}
Let $(\alpha, \beta, \gamma, \delta, \varepsilon, \zeta) \in \mathbb{C}^6 $ with $(\gamma, \delta) \neq (0,0)$ and $(\varepsilon, \zeta)  \neq (0,0)$. Then, one has the following isomorphisms of $L$-polarized K3 surfaces:
\begin{enumerate}[label=(\alph*)]
\item  $\mathcal{X}(\alpha,\beta, \gamma, \delta, \varepsilon, \zeta) \ \simeq \
\mathcal{X}(t^2 \alpha,  \ t^3 \beta,  \ t^5 \gamma,  \ t^6 \delta,  \ t^{-1} \varepsilon,  \ \zeta  ) $, for any $t \in \mathbb{C}^*$,
\item $\mathcal{X}(\alpha,\beta, \gamma, \delta, \varepsilon, \zeta) \ \simeq \  
\mathcal{X}(\alpha,  \beta,  \varepsilon,  \zeta, \gamma,  \delta  )$
\end{enumerate}
\end{lemma}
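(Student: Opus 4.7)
For part (a), my plan is to realize the parameter change as a diagonal projective automorphism
\beqn
\phi_t : [\mathbf{X}:\mathbf{Y}:\mathbf{Z}:\mathbf{W}] \longmapsto [\lambda_1 \mathbf{X}:\lambda_2 \mathbf{Y}:\lambda_3 \mathbf{Z}:\lambda_4 \mathbf{W}]
\eeqn
of $\mathbb{P}^3$. Substituting into $\mathrm{Q}(\alpha,\beta,\gamma,\delta,\varepsilon,\zeta)$ and requiring the result to be proportional to the defining equation of $\mathrm{Q}(t^2\alpha,t^3\beta,t^5\gamma,t^6\delta,t^{-1}\varepsilon,\zeta)$ yields a system of weight conditions, one for each of the eight monomials in the normal form. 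After normalizing $\lambda_4 = 1$, the equations arising from $\mathbf{W}^4$, $\mathbf{X}\mathbf{W}^3$, $\mathbf{Z}\mathbf{W}^3$, and $\mathbf{Y}^2\mathbf{Z}\mathbf{W}$ force $\lambda_1 = t^{-1}$, $\lambda_3 = t^3$, and $\lambda_2^2 = t^{-3}$; either square root of $t^{-3}$ may be chosen for $\lambda_2$, since $\mathbf{Y}$ appears only quadratically. The remaining four monomial identities then hold automatically. Because $\phi_t$ is a projective linear isomorphism, it preserves the hyperplane class, so the induced isomorphism of minimal resolutions respects the polarization $\mathcal{H}$ of $(\ref{linepolariz})$ and gives the claimed isomorphism of $\mathrm{N}$-polarized K3 surfaces.

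For part (b), a diagonal rescaling alone is insufficient: the monomials $\mathbf{X}\mathbf{Z}^2\mathbf{W}$ and $\mathbf{X}\mathbf{W}^3$ that carry $\gamma$ and $\varepsilon$ have incompatible weight structure under any diagonal action. Moreover, the naive involution $\mathbf{Z}\leftrightarrow\mathbf{W}$ does not send $\mathrm{Q}(\alpha,\beta,\gamma,\delta,\varepsilon,\zeta)$ to a scalar multiple of $\mathrm{Q}(\alpha,\beta,\varepsilon,\zeta,\gamma,\delta)$: a direct computation shows that the two equations differ by $(\mathbf{Z}-\mathbf{W})$ times an explicit cubic in $\mathbf{X},\mathbf{Z},\mathbf{W}$. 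Any explicit construction must therefore be a more elaborate birational transformation of $\mathbb{P}^3$, compatible with the singularity types at $\mathrm{P}_1,\mathrm{P}_2$ and with the dual configuration $(\ref{exdiagg77})$. Producing such a map concretely is the main obstacle.

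A cleaner alternative is moduli-theoretic. The invariants in $(\ref{modinv})$ satisfy $J_4 = \gamma\varepsilon$, $J_5 = \gamma\zeta + \delta\varepsilon$, and $J_6 = \delta\zeta$, all three manifestly symmetric under $(\gamma,\delta)\leftrightarrow(\varepsilon,\zeta)$; together with $J_2 = \alpha$ and $J_3 = \beta$, the two parameter sets therefore correspond to the same point of the space $\mathfrak{M}_\mathrm{N}$ in $(\ref{modulispace})$. Provided the coarse moduli statement for $\mathrm{N}$-polarized K3 surfaces is established independently of Lemma~\ref{symmetries1}, for instance through the period map of Matsumoto~\cite{MR1204828} and the Torelli-type argument for $\mathbf{H}_2$, the isomorphism asserted in (b) follows at once, without any further coordinate computation.
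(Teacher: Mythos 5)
Your part (a) is correct and is essentially the paper's own argument: the diagonal automorphism $[\mathbf{X}:\mathbf{Y}:\mathbf{Z}:\mathbf{W}]\mapsto[t^{-1}\mathbf{X}:t^{-3/2}\mathbf{Y}:t^{3}\mathbf{Z}:\mathbf{W}]$ that your weight conditions produce is, after rescaling the coordinates and inverting $t$, exactly the map $[q^{8}\mathbf{X}:q^{9}\mathbf{Y}:\mathbf{Z}:q^{6}\mathbf{W}]$ (with $q^{2}=t$) used in the paper, and your check of the remaining monomials is the same computation.

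Part (b) has a genuine gap. You correctly rule out diagonal maps and the naive swap $\mathbf{Z}\leftrightarrow\mathbf{W}$, but you then abandon the search for an explicit transformation and fall back on the moduli interpretation: since $J_4,J_5,J_6$ are symmetric under $(\gamma,\delta)\leftrightarrow(\varepsilon,\zeta)$, the two surfaces give the same point of $\mathbb{W}\mathbb{P}(2,3,4,5,6)$. As it stands this is circular. Knowing that the $J_k$ are unchanged by the swap proves nothing unless you already know they form a \emph{complete} set of invariants, i.e.\ that equal $J$-values force an isomorphism of polarized K3 surfaces. But that completeness statement (the identification of $\mathfrak{M}_{\mathrm{N}}$ with the locus $(J_3,J_4,J_5)\neq(0,0,0)$) is itself obtained by first showing that parameter tuples give isomorphic surfaces precisely when they lie in the same $\mathfrak{G}$-orbit -- and the swap $(\alpha,\beta,\gamma,\delta,\varepsilon,\zeta)\mapsto(\alpha,\beta,\varepsilon,\zeta,\gamma,\delta)$ is one of the two generators of $\mathfrak{G}$. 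Your proviso ``provided the coarse moduli statement is established independently of Lemma~\ref{symmetries1}'' is therefore not a harmless hypothesis but exactly the missing content.

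The explicit map you were looking for is not linear but the quadratic birational involution
\[
[\mathbf{X}:\mathbf{Y}:\mathbf{Z}:\mathbf{W}] \ \dashrightarrow\ [\,\mathbf{X}\mathbf{Z}:\mathbf{Y}\mathbf{Z}:\mathbf{W}^{2}:\mathbf{Z}\mathbf{W}\,],
\]
a Cremona-type transformation centered on the singular configuration at $\mathrm{P}_1$ and $\mathrm{P}_2$ (which is what one should expect, since the swap must exchange the two $E_7(-1)$ summands, i.e.\ the $A_9$- and $A_5$-chains). A one-line substitution shows that it pulls the quartic~(\ref{mainquartic}) back to $\mathbf{Z}^{2}\mathbf{W}^{2}$ times the quartic with $(\gamma,\delta)$ and $(\varepsilon,\zeta)$ interchanged, so it restricts to a birational map between the two quartics, which then extends to an isomorphism of the minimal resolutions compatible with the polarizations. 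Once this map is in hand, part (b) is as short a computation as part (a).
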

\par We also ave the following:
\begin{proposition}
\label{NikulinInvolution}
Let $(\alpha, \beta, \gamma, \delta, \varepsilon, \zeta) \in \mathbb{C}^6 $ as before. A Nikulin involution on the $L$-polarized K3 surfaces $\mathcal{X}(\alpha, \beta, \gamma, \delta , \varepsilon, \zeta)$ is induced by the map
\beq
\label{eq:NikulinInvolutions}
\begin{split}
 \Psi:\qquad  \mathbb{P}^3 \  \longrightarrow & \ \mathbb{P}^3, \\
  [\mathbf{X}: \mathbf{Y}: \mathbf{Z}: \mathbf{W}] \  \mapsto & \ [  \ (2 \gamma \mathbf{X} - \delta \mathbf{W}) \mathbf{X}\mathbf{Z} \  : \ - (2 \gamma \mathbf{X} - \delta \mathbf{W}) \mathbf{Y}\mathbf{Z}   :\\
  & \quad \, (2 \varepsilon \mathbf{X} - \zeta \mathbf{W})\mathbf{W}^2 \, : \ (2 \gamma \mathbf{X} - \delta \mathbf{W}) \mathbf{Z}^2 \ ] \,.
\end{split}
\eeq
\end{proposition}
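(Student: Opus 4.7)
The plan is to prove three things: \textbf{(a)} the birational map $\Psi$ preserves the quartic $\mathrm{Q}(\alpha,\beta,\gamma,\delta,\varepsilon,\zeta)$, so it descends to a birational self-map of the K3 surface $\mathcal{X}$; \textbf{(b)} $\Psi^2$ restricts to the identity on $\mathrm{Q}$, so that $\Psi$ lifts to a biregular involution $\iota$ of the minimal resolution; and \textbf{(c)} the resulting involution $\iota$ is symplectic, hence Nikulin.

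For (a), let $F$ denote the left-hand side of (\ref{mainquartic}). The goal is the polynomial identity
\beqn
F\bigl(\Psi(\mathbf{X},\mathbf{Y},\mathbf{Z},\mathbf{W})\bigr) \ = \ P(\mathbf{X},\mathbf{Y},\mathbf{Z},\mathbf{W})\cdot F(\mathbf{X},\mathbf{Y},\mathbf{Z},\mathbf{W})
\eeqn
for an explicit polynomial $P$. Writing $L=2\gamma\mathbf{X}-\delta\mathbf{W}$ and $M=2\varepsilon\mathbf{X}-\zeta\mathbf{W}$, one has $\Psi = [L\mathbf{X}\mathbf{Z}:-L\mathbf{Y}\mathbf{Z}:M\mathbf{W}^2:L\mathbf{Z}^2]$, and the common cubic factor $L$ appears in three of the four slots, which causes many monomials of the pullback to coalesce into multiples of $F$. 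For (b), I would compose $\Psi$ with itself, first computing the auxiliary linear forms in the new coordinates, for which the key identities are $2\gamma\mathbf{X}'-\delta\mathbf{W}' = L\mathbf{Z}(2\gamma\mathbf{X}-\delta\mathbf{Z})$ and the analogous expression for $2\varepsilon\mathbf{X}'-\zeta\mathbf{W}'$. Substituting back, one checks that all four components of $\Psi\circ\Psi$ share a common projective factor and that what remains reproduces $[\mathbf{X}:\mathbf{Y}:\mathbf{Z}:\mathbf{W}]$ modulo a polynomial proportional to $F$.

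For (c), the cleanest path is geometric. On $\mathrm{Q}$ the fixed locus of $\Psi$ is contained in the intersection with $\{\mathbf{Y}=0\}$ together with the indeterminacy locus, and after passing to the smooth model $\mathcal{X}$ and discarding the exceptional divisors over $\mathrm{P}_1,\mathrm{P}_2$ this descends to a finite, nonempty configuration of points. Since a non-symplectic involution of a K3 surface either acts freely (Enriques type) or fixes a disjoint union of smooth curves, an involution with a nonempty finite fixed set must be symplectic, hence Nikulin with precisely eight fixed points. An equivalent route is to verify $\iota^*\omega = \omega$ directly on the Poincar\'e residue $\omega = \mathrm{Res}(\Omega/F)$; the sign on the $\mathbf{Y}$-coordinate cancels against the Jacobian contribution from the remaining coordinate change, consistent with the fact that the fibration listed in Lemma~\ref{prop:ADE} with $\mathcal{W}=\mathbb{Z}/2\mathbb{Z}$ should carry exactly such a translation-by-2-torsion automorphism. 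The main obstacle is the polynomial bookkeeping in (a) and (b): although the structural factorizations dictate the shape of $P$ and of the scalar cancellation, verifying them rests on a computer algebra check in which the degree-four relation $F=0$ enters essentially to collapse the composition to the identity.
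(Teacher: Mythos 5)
Your proposal matches the paper's own proof, which likewise reduces everything to two computational checks: that $\Psi$ is an involution of the quartic $\mathrm{Q}(\alpha,\beta,\gamma,\delta,\varepsilon,\zeta)$, and that it preserves the holomorphic two-form, written in the affine chart $\mathbf{W}=1$ as $d\mathbf{X}\wedge d\mathbf{Y}/F_{\mathbf{Z}}$, modulo the relation $F=0$ --- exactly your second route for step (c). Your alternative fixed-locus argument for symplecticity is sound in principle but as stated glosses over the action on the exceptional curves over $\mathrm{P}_1,\mathrm{P}_2$ (one cannot simply ``discard'' them, since a non-symplectic involution could fix such a curve pointwise) and over the nonemptiness of the finite fixed set; the direct residue-form computation avoids both issues and is the argument the paper actually uses.
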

\begin{proof}
One checks that $\Psi$ constitutes an involution of the projective quartic surface $\mathcal{Q}(\alpha, \beta, \gamma, \delta , \varepsilon, \zeta) \subset  \mathbb{P}^3(\mathbf{X}, \mathbf{Y}, \mathbf{Z}, \mathbf{W})$.  If we use the affine chart $\mathbf{W}=1$ then the unique holomorphic 2-form is given by $d\mathbf{X} \wedge d\mathbf{Y} / F_\mathbf{Z} ( \mathbf{X},\mathbf{Y}, \mathbf{Z})$ where $F( \mathbf{X},\mathbf{Y}, \mathbf{Z})$ is the left hand side of Equation~(\ref{mainquartic}). One then checks that Equation~(\ref{eq:NikulinInvolutions}) constitutes a symplectic involution after using $F( \mathbf{X},\mathbf{Y}, \mathbf{Z})=0$. 
\end{proof}
\begin{remark}
For the K3 surfaces $\mathcal{X}(\alpha, \beta, \gamma, \delta , \varepsilon, \zeta)$, quotienting by the involution and blowing up recovers a double sextic surface, i.e., the double cover of the projective plane branched along the union of six lines. The corresponding family of double sextic surfaces was described in \cites{MR2254405,MR4015343}.
\end{remark}
We now state a major result of this article:
\begin{theorem}
\label{thm1}
The minimal resolution of the quartic surface~(\ref{mainquartic}) is a K3 surface $\mathcal{X}$ endowed with a canonical $L$-polarization. Conversely, a general $L$-polarized K3 surface $\mathcal{X}$ has a birational projective model~(\ref{mainquartic}). In particular, every Jacobian elliptic fibration on $\mathcal{X}$ is attained as a pencil of lines or conics as follows:
\begin{center}
\scalebox{\MyScaleMed}{
\begin{tabular}{l|l|l|l|l}
name 		& singular fibers & $\operatorname{MW}(\pi,\sigma)$  & pencil & Eqn.\\
\hline
standard 		& $2 III^* + 6 I_1$ 		& trivial 					&		
$\begin{array}{l} 
\text{residual surface intersection} \\ 
\text{of $L_2(u, v)=0$ and  $\mathcal{Q}(\alpha, \beta, \gamma, \delta , \varepsilon, \zeta)$} \end{array}$ & (\ref{eqn:std}) \\
\hline
alternate 		& $I_8^* + 2 I_2 + 6 I_1$ 	& $\mathbb{Z}/2\mathbb{Z}$ 	& 
$\begin{array}{l} 
\text{residual surface intersection} \\ 
\text{of $L_1(u, v)=0$ and  $\mathcal{Q}(\alpha, \beta, \gamma, \delta , \varepsilon, \zeta)$} \end{array}$ & (\ref{eqn:alt})\\
\hline
base-fiber dual 	& $II^* + I_2^* + 6 I_1$ 	& trivial 					& 
$\begin{array}{l} 
\text{residual surface intersection} \\ 
\text{of $L_3(u, v)=0$ and  $\mathcal{Q}(\alpha, \beta, \gamma, \delta , \varepsilon, \zeta)$} \end{array}$ & (\ref{eqn:bfd})\\
\hline
maximal 		& $I_{10}^* + 8 I_1$ 		& trivial					& 
$\begin{array}{l} 
\text{residual surface intersection} \\ 
\text{of $C_3(u, v)=0$ and  $\mathcal{Q}(\alpha, \beta, \gamma, \delta , \varepsilon, \zeta)$} \end{array}$ & (\ref{eqn:max}) \\
\end{tabular}}
\end{center}
\end{theorem}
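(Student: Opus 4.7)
The plan is to prove Theorem~\ref{thm1} fibration-by-fibration, combining the lattice classification of Proposition~\ref{cor:EFS} with the explicit geometry of the quartic $\mathrm{Q}(\alpha,\beta,\gamma,\delta,\varepsilon,\zeta)$. First, for each of the four lattices $\mathrm{K}$ listed in Lemma~\ref{prop:ADE}, I would exhibit the corresponding elliptic fiber class $F\in\operatorname{NS}(\mathcal{X})$ as an effective isotropic divisor assembled from the $(-2)$-curves in diagram~\eqref{exdiagg77}: one selects a subgraph of shape $\tilde{E}_{7}$, $\tilde{D}_{12}\oplus 2\tilde{A}_{1}$, $\tilde{E}_{8}\oplus\tilde{D}_{6}$, or $\tilde{D}_{14}$, decorates its nodes with the Kodaira multiplicities, and reads off $F^{2}=0$ from the intersection pattern. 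A zero section $\sigma$ is then identified as a leftover $(-2)$-curve with $F\cdot\sigma=1$ --- for instance $a_{7}$ for the standard fibration, $a_{5}$ for the base-fiber dual, and $b_{4}$ for the maximal --- while in the alternate case two such curves $a_{1},b_{4}$ appear and, together with the exchange of the two $I_{2}$-fibers induced by the Nikulin involution~\eqref{eq:NikulinInvolutions}, realise the $\mathbb{Z}/2\mathbb{Z}$-torsion Mordell--Weil group.

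Second, the degree of the generic fiber under the quartic embedding of $\mathcal{X}$ into $\mathbb{P}^{3}$ equals $F\cdot\mathcal{H}$, and this is easily evaluated using that $\mathcal{H}$ meets only the non-contracted curves $\mathrm{L}_{1},\mathrm{L}_{2},\mathrm{L}_{3},\mathrm{R}_{1},\mathrm{R}_{2}$ with degrees $1,1,1,2,3$. The resulting fiber degrees $3,3,5,6$ for the standard, alternate, base-fiber dual, and maximal fibrations force the ambient pencil in $\mathbb{P}^{3}$ to consist of quadric surfaces in the first three cases and cubic surfaces in the last; the fixed part of the pencil on $\mathrm{Q}$ is in turn read off as the effective restriction of $d\mathcal{H}-F$ (with $d=2$ or $3$) to the non-contracted curves. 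The pencils themselves are constructed by choosing two quadric (resp.\ cubic) generators whose base-locus intersection with $\mathrm{Q}$ matches this fixed part: the first three pencils are naturally labelled $\mathrm{L}_{j}(u,v)=0$ after the line $\mathrm{L}_{j}$ appearing in the fixed part, while the maximal pencil $\mathrm{C}_{3}(u,v)=0$ comes directly from the two cubic coordinates $(2\gamma\mathbf{X}-\delta\mathbf{W})\mathbf{Z}^{2}$ and $(2\varepsilon\mathbf{X}-\zeta\mathbf{W})\mathbf{W}^{2}$ of the Nikulin involution $\Psi$ in Proposition~\ref{NikulinInvolution}.

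Finally, the identification of the singular fibers is completed by specialising $[u:v]$ to the finitely many values for which the residual curve becomes reducible: for the three linear-pencil fibrations these critical values are precisely the coordinate hyperplanes $\{\mathbf{W}=0\}$, $\{\mathbf{Z}=0\}$, and $\{2\varepsilon\mathbf{X}-\zeta\mathbf{W}=0\}$, whose restrictions to $\mathrm{Q}$ --- computed directly from \eqref{mainquartic} --- produce triple-line-plus-residual configurations such as $3\mathrm{L}_{1}+\mathrm{L}_{2}$ and $3\mathrm{L}_{2}+\mathrm{L}_{3}$. Pulling these back to $\mathcal{X}$ along the $A_{9}$-chain $a_{1},\ldots,a_{9}$ over $\mathrm{P}_{1}$ and the $A_{5}$-chain $b_{1},\ldots,b_{5}$ over $\mathrm{P}_{2}$ assembles the prescribed $\tilde{E}_{7}$, $\tilde{D}_{12}$, $\tilde{E}_{8}$, $\tilde{D}_{6}$, or $\tilde{D}_{14}$ configurations, and the Euler characteristic $\chi(\mathcal{X})=24$ then forces the remaining $6$ (or $8$) singular fibers to be nodal $I_{1}$'s, completing the match with the table. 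The main technical obstacle is Step~2: producing, for each line $\mathrm{L}_{j}$, the correct two-dimensional linear system of quadric generators so that the base locus on $\mathrm{Q}$ exactly reproduces the predicted fixed part of $|F|$. Once this pencil is guessed from the lattice decomposition of Step~1, the remaining checks reduce to explicit but routine polynomial computations in the homogeneous coordinates $\mathbf{X},\mathbf{Y},\mathbf{Z},\mathbf{W}$.
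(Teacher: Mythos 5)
Your overall strategy is the same as the paper's: Proposition~\ref{cor:EFS} for exhaustiveness, then for each of the four lattices an explicit realization of the fiber class as an $\hat{E}_7$, $\hat{D}_{12}\oplus 2\hat{A}_1$, $\hat{E}_8\oplus\hat{D}_6$, or $\hat{D}_{14}$ configuration inside diagram~(\ref{exdiagg77}) with the sections $a_7$, $\{a_1,b_4\}$, $a_9$ (or $a_5$), $b_4$, followed by an identification of the inducing pencil via the polarizing class $\mathcal{H}$. However, three steps do not survive scrutiny as written. First, your degree bookkeeping is off: using the representatives whose non-exceptional support is a single line plus one more curve, one finds $F\cdot\mathcal{H}=3$ for the standard, alternate, \emph{and} base-fiber dual fibrations (e.g.\ $\mathrm{F}^{(a)}_{\text{bfd}}=\mathrm{L}_1+b_1+2b_2+2b_3+2b_4+b_5+\mathrm{R}_1$ has degree $1+2=3$; your ``5'' only appears for the Nikulin-conjugate class $\mathrm{F}^{(b)}_{\text{bfd}}$, which defines a different, though isomorphic, fibration). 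A degree-$3$ residual curve does not ``force'' a pencil of quadrics --- the forms $\mathrm{L}_j(u,v)$ in the theorem are \emph{linear}, i.e.\ pencils of planes through the lines $\mathrm{L}_2$, $\mathrm{L}_1$, $\mathrm{L}_3$, and the correct mechanism is to exhibit $\mathcal{H}-F-\mathrm{L}_j$ (resp.\ $2\mathcal{H}-\mathrm{F}_{\text{max}}-\mathrm{R}_1$ for the quadric pencil of the maximal case) as an effective divisor supported on exceptional curves, which is what the paper does.

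Second, your candidate for the maximal pencil is wrong: the pencil spanned by $(2\gamma\mathbf{X}-\delta\mathbf{W})\mathbf{Z}^2$ and $(2\varepsilon\mathbf{X}-\zeta\mathbf{W})\mathbf{W}^2$ does not have base locus $\mathrm{R}_1$ (nor $\mathrm{L}_1+\mathrm{R}_1+\mathrm{R}_2$ for the reducible-cubic version) on $\mathrm{Q}$; the actual pencil $\mathrm{C}_3(u,v)$ of Equation~(\ref{eq:conic}) is a pencil of quadrics through $\mathrm{R}_1$ whose $v$-coefficient is a genuinely non-obvious quadric involving $\mathbf{Y}^2$, and producing it is precisely the computation your plan defers. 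Third, ``$\chi(\mathcal{X})=24$ forces the remaining fibers to be nodal $I_1$'s'' is not by itself a proof: the complementary Euler number $6$ (or $8$) could a priori be distributed over additional $I_2$'s or cusps of type $II$. You need to combine the lattice statement that $\mathrm{K}^{\text{root}}$ is \emph{exactly} the listed root lattice (so no further reducible fibers occur for generic moduli) with a genericity argument excluding type $II$; the paper settles this by computing the explicit discriminants $p(u,v)$, $D(u,v)$, $P(u,v)$, $d(u,v)$ of the Weierstrass models in Sections~\ref{ssec:std}--\ref{ssec:max} and checking they have simple zeros. With these three points repaired, your outline reproduces the paper's argument.
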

\begin{proof}
We construct the Weierstrass models for these four Jacobian elliptic fibrations explicitly in the Sections~\ref{ssec:std}-\ref{ssec:max}. The fact that these Jacobian elliptic fibrations are the \emph{only} possible fibrations was already proven in Proposition~\ref{cor:EFS}. Conversely, Proposition~\ref{cor:EFS} and Lemma~\ref{prop:ADE} prove that every general $L$-polarized K3 surface admits a unique fibration with the singular fibers $ 6 I_1 + 2 I_2 + I_8^*$ and a Mordell-Weil group $\mathbb{Z}/2\mathbb{Z}$ that can be brought into the form of Equation~(\ref{eqn:alt}). It follows from Equations~(\ref{eqn:substitution_alt}) that from such a fibration a quartic surface can be constructed if we write the polynomials $A$ and $B$ according to Equation~(\ref{eqn:pf_comparison}).  
\end{proof}
\subsection{The standard fibration}
\label{ssec:std}
There are exactly two ways of embedding two disjoint reducible fibers, each given by an extended Dynkin diagram $\widetilde{E}_7$, into the diagram~(\ref{exdiagg77}). These two ways are depicted in Figure~\ref{fig:e7e7fib}, where the green and the blue nodes indicate the two reducible fibers. In the case of Figure~\ref{Fig1a}, we have
\beq
\begin{split}
{\color{green}\widetilde{E}_7}=   \langle L_3,  a_1,  a_2, a_3, L_2,  a_4,  a_5 ; a_6 \rangle \,, \quad
{\color{blue}\widetilde{E}_7} =   \langle b_5,  b_4,  b_3, b_2, b_1, L_1, a_9 ; a_8 \rangle \, .
\end{split}
\eeq
Thus, the smooth fiber class is given by
\beq
 \mathrm{F}^{(a)}_{\text{std}} = L_3 + 2 a_1 + 3 a_2 + 4 a_3 + 2 L_2 + 3  a_4 + 2 a_5 +  a_6  \,,
 \eeq
and the class of a section is ${\color{red} a_7}$. 
\begin{figure}
\hspace*{-1.5cm}
\begin{subfigure}{.5\textwidth}  
\centering
\begin{equation*}
\scalemath{\MyScaleSmall}{
\xymatrix @-0.9pc  {
& & \color{green}{\stackrel{L_3}{\bullet}} \ar @{=}[rrrrrrrrrr]& &  & & &    & & & & & \stackrel{R_1}{\bullet} &  &  \\ 
 & &  & &  & & & &   &    & &  & & \\
&  
 \color{green}{\stackrel{a_1}{\bullet}} \ar @{-} [r] \ar @{-}[ddr] \ar @{-}[uur] &
 \color{green}{\stackrel{a_2}{\bullet}} \ar @{-} [r]  &
 \color{green}{\stackrel{a_3}{\bullet}} \ar @{-} [r]  \ar @{-} [d] &
 \color{green}{\stackrel{a_4}{\bullet}} \ar @{-} [r] &
 \color{green}{\stackrel{a_5}{\bullet}} \ar @{-} [r] &
 \color{green}{\stackrel{a_6}{\bullet}} \ar @{-} [r] &
 \color{red}{\stackrel{a_7}{\bullet}} \ar @{-} [r] &
 \color{blue}{\stackrel{a_8}{\bullet}} \ar @{-} [r] &
 \color{blue}{\stackrel{a_9}{\bullet}} \ar @{-} [r] &
 \color{blue}{\stackrel{L_1}{\bullet}} \ar @{-} [r] &
 \color{blue}{\stackrel{b_2}{\bullet}} \ar @{-} [r] \ar @{-} [d] &
 \color{blue}{\stackrel{b_3}{\bullet}} & \ \color{blue}{\stackrel{b_4}{\bullet}} \ar @{-} [l] \ar @{-}[ddl] \ar @{-}[uul]
 & \\
 & &  &  \color{green}{\stackrel{L_2}{\bullet}} &  & & & &   &    & &  \color{blue}{\stackrel{b_1}{\bullet}}  & & \\
 & & \stackrel{R_2}{\bullet} \ar @{=}[rrrrrrrrrr]& &  & & &    & & & & &    \color{blue}{\stackrel{b_5}{\bullet}} & & \\
 } }
\end{equation*}
\caption{\phantom{A}}
\label{Fig1a}
\end{subfigure}%
\quad
\begin{subfigure}{.5\textwidth}
\centering%
\begin{equation*}
\scalemath{\MyScaleSmall}{
\xymatrix @-0.9pc  {
& & \stackrel{L_3}{\bullet} \ar @{=}[rrrrrrrrrr]& &  & & &    & & & & & \color{green}{\stackrel{R_1}{\bullet}} &  &  \\ 
 & &  & &  & & & &   &    & &  & & \\
&  
 \color{blue}{\stackrel{a_1}{\bullet}} \ar @{-} [r] \ar @{-}[ddr] \ar @{-}[uur] &
 \color{blue}{\stackrel{a_2}{\bullet}} \ar @{-} [r]  &
 \color{blue}{\stackrel{a_3}{\bullet}} \ar @{-} [r]  \ar @{-} [d] &
 \color{blue}{\stackrel{a_4}{\bullet}} \ar @{-} [r] &
 \color{blue}{\stackrel{a_5}{\bullet}} \ar @{-} [r] &
 \color{blue}{\stackrel{a_6}{\bullet}} \ar @{-} [r] &
 \color{red}{\stackrel{a_7}{\bullet}} \ar @{-} [r] &
 \color{green}{\stackrel{a_8}{\bullet}} \ar @{-} [r] &
 \color{green}{\stackrel{a_9}{\bullet}} \ar @{-} [r] &
 \color{green}{\stackrel{L_1}{\bullet}} \ar @{-} [r] &
 \color{green}{\stackrel{b_2}{\bullet}} \ar @{-} [r] \ar @{-} [d] &
 \color{green}{\stackrel{b_3}{\bullet}} & \ \color{green}{\stackrel{b_4}{\bullet}} \ar @{-} [l] \ar @{-}[ddl] \ar @{-}[uul]
 & \\
 & &  &  \color{blue}{\stackrel{L_2}{\bullet}} &  & & & &   &    & &  \color{green}{\stackrel{b_1}{\bullet}}  & & \\
 & & \color{blue}{\stackrel{R_2}{\bullet}} \ar @{=}[rrrrrrrrrr]& &  & & &    & & & & &    \stackrel{b_5}{\bullet} & & \\
 } }
\end{equation*}
\caption{\phantom{B}}
\label{Fig1b}
\end{subfigure}%
\caption{The \emph{standard fibrations} with 2 fibers of type $\widetilde{E}_7$}
\label{fig:e7e7fib}
\end{figure}%
Making the substitutions  
\beq
 \mathbf{X} = u v x\,, \quad  \mathbf{Y}= y\,, \quad \mathbf{Z} = 4 u^4 v^2 z\,, \quad \mathbf{W} = 4 u^3 v^3 z \,,
\eeq
in Equation~(\ref{mainquartic}), is compatible with $L_2(u, v)=0$. Here, $L_2(u, v)=u \mathbf{W} - v \mathbf{Z} =0$ for $[u:v] \in \mathbb{P}^1$ is the pencil of planes containing the line $L_2$. One obtains  the Jacobian elliptic fibration $\pi_{\text{std}} : \mathcal{X} \rightarrow \mathbb{P}^1$ with fiber $\mathcal{X}_{[u:v]}$, given by the Weierstrass equation 
\beq
\label{eqn:std}
\mathcal{X}_{[u:v]}: \quad y^2 z  = x^3 +  f(u, v) \, x z^2 + g(u, v) \, z^3 \,, 
\eeq
admitting the section $\sigma_{\text{std}} : [x:y:z] = [0:1:0]$, and with the discriminant 
\beq
\Delta(u, v) =  4 f^3 + 27 g^2 =  64 \, u^9 v^9  p(u, v) \,,
\eeq
where
\beq
f(u, v)  = - 4 u^3 v^3  \Big(\gamma  u^2 + 3  \alpha  uv + \varepsilon v^2\Big) \,, \quad
g(u, v)  =  8  u^5 v^5 \Big(\delta  u^2 - 2  \beta  u v  + \zeta  v^2\Big) \,,
\eeq
and $p(u, v) = 4  \gamma^3 u^6 + \dots + 4 \varepsilon^3 v^6$ is an irreducible homogeneous polynomial of degree six.
We have the following:
\begin{lemma}
\label{lem:fib_std}
Equation~(\ref{eqn:std}) defines a Jacobian elliptic fibration with the singular fibers $6 I_1 + 2 III^*$ and a trivial Mordell-Weil group $\operatorname{MW}(\pi_{\text{std}}, \sigma_{\text{std}}) = \{ \mathbb{I}\}$.
\end{lemma}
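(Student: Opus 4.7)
The plan is to apply Tate's algorithm at the two distinguished base points $[u:v]=[0:1]$ and $[u:v]=[1:0]$, use Shioda--Tate to read off the Mordell--Weil rank, and then pin down the torsion using Lemma~\ref{prop:ADE}.

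In the affine chart $v=1$, the explicit formulas give $f(u,1) = -4 u^3 (\gamma u^2 + 3\alpha u + \varepsilon)$ and $g(u,1) = 8 u^5 (\delta u^2 - 2\beta u + \zeta)$, so for a generic parameter choice one reads off $\operatorname{ord}_{u=0}(f) = 3$, $\operatorname{ord}_{u=0}(g) = 5$, and $\operatorname{ord}_{u=0}(\Delta) = 9$; the leading $u$-coefficients are non-zero multiples of $\varepsilon$, $\zeta$, and $\varepsilon^3$ respectively, so the Weierstrass model is already minimal at $u=0$. By Tate's algorithm these vanishing orders are characteristic of a Kodaira fiber of type $III^*$, whose reduced fiber is the extended Dynkin diagram $\widetilde{E}_7$. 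A symmetric computation in the chart $u=1$ at $v=0$ (which could alternatively be reduced to the previous case via the base involution $[u:v] \leftrightarrow [v:u]$ combined with Lemma~\ref{symmetries1}(b)) yields a second fiber of type $III^*$.

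Next I would handle the residual discriminant. Since $p(u,v)$ is irreducible of degree six and satisfies $p(0,1) = -4\varepsilon^3 \neq 0$ as well as $p(1,0) = 4\gamma^3 \neq 0$, its zero locus is disjoint from the two $III^*$ loci; its six roots are generically simple (a Zariski-open condition controlled by the non-vanishing of the discriminant of $p$ viewed as a polynomial in $u/v$), so each contributes a singular fiber of type $I_1$. This exhausts the singular fibers.

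Finally, the sublattice of $\operatorname{NS}(\mathcal{X})$ generated by the components of the singular fibers disjoint from the section $\sigma_{\text{std}}$ is exactly $\mathrm{K}^{\text{root}} = E_7(-1) \oplus E_7(-1)$. The Shioda--Tate formula
\[
  16 \;=\; \rho(\mathcal{X}) \;=\; 2 \,+\, \operatorname{rank} \operatorname{MW}(\pi_{\text{std}},\sigma_{\text{std}}) \,+\, (7+7)
\]
forces $\operatorname{rank}\operatorname{MW} = 0$, consistent with Lemma~\ref{lem:lattice}. For the torsion, Lemma~\ref{prop:ADE} (Case I) enumerates all configurations compatible with the $\mathrm{N}$-polarization, and $\mathrm{K}^{\text{root}} = E_7(-1) \oplus E_7(-1)$ occurs only with $\mathcal{W} = \{\mathbb{I}\}$; equivalently, the requirement $|D(\mathrm{K})| = 4$ combined with the embedding $\mathcal{W} \hookrightarrow D(\mathrm{K}^{\text{root}}) \cong (\mathbb{Z}/2)^2$ forces $|\mathcal{W}|^2 = 1$. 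Hence $\operatorname{MW}(\pi_{\text{std}}, \sigma_{\text{std}}) = \{\mathbb{I}\}$. The only delicate point is verifying the generic simplicity of the roots of $p$, but this reduces to an explicit non-vanishing condition on the six parameters.
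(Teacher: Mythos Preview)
Your proof is correct and follows essentially the same approach as the paper, which simply states that the result follows by checking the Kodaira types at $u=0$, $v=0$, and along $p(u,v)=0$; you have carried out this check in detail via the vanishing orders of $f$, $g$, $\Delta$, and made explicit the Mordell--Weil argument (Shioda--Tate plus Lemma~\ref{prop:ADE}) that the paper leaves implicit in the lemma itself but has already established in Section~\ref{sec:lattice}. One cosmetic remark: your sign in $p(0,1)=-4\varepsilon^3$ actually agrees with the direct computation of $4f^3+27g^2$, so there is no issue there.
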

\begin{proof}
The proof easily follows by checking the Kodaira type of the singular fibers at $p(u, v)=0$ and $u = 0$ and $v = 0$.
\end{proof}
\par Applying the Nikulin involution in Proposition~\ref{NikulinInvolution}, we obtain the fiber configuration in Figure~\ref{Fig1b} with
\beq
\begin{split}
{\color{blue}\widetilde{E}_7} =   \langle R_2,  a_1,  a_2,  a_3, L_2, a_4, a_5 ; a_6 \rangle \,, \quad
{\color{green}\widetilde{E}_7} =   \langle R_1,  b_4,  b_3,  b_2, b_1, L_1, a_9 ; a_8 \rangle \,.
\end{split}
\eeq
The smooth fiber class is given by
\beq
 \mathrm{F}^{(b)}_{\text{std}} =  R_2 + 2 a_1 + 3 a_2 + 4 a_3 + 2 L_2 + 3  a_4 + 2 a_5 +  a_6  \,,
 \eeq
and the class of the section is ${\color{red} a_7}$. Using the polarizing divisor $\mathcal{H}$ in Equation~(\ref{linepolariz}), one checks that
\beq
 2 \mathcal{H} -  \mathrm{F}^{(b)}_{\text{std}} - L_1  - L_2  - L_3\equiv 2 a_1 + 3 a_2 + 4 a_3 + \dots + 4  a_7 + 3 a_8 + 2 a_9 + b_1 + 2 b_2 + \dots + 2 b_5\,,
\eeq
which shows that the fibration is also induced by intersecting the quartic with the pencil of quadratic surfaces, denoted by $C_1(u, v)=0$ with $[u:v] \in \mathbb{P}^1$, containing the lines $L_1, L_2, L_3$. A computation yields
\beq
 C_1(u, v)= v \mathbf{W} \big( 2 \varepsilon \mathbf{X}- \zeta \mathbf{W} \big)- u \mathbf{Z} \big( 2 \gamma \mathbf{X} - \delta \mathbf{W}\big) =0 \,.
\eeq
\subsection{The alternate fibration}
\label{ssec:alt}
There is exactly one way of embedding a reducible fiber given by the extended Dynkin diagram $\widetilde{D}_{12}$ and two reducible fibers of type $\widetilde{A}_{1}$ into the diagram~(\ref{exdiagg77}). The configuration is invariant when applying the Nikulin involution in Proposition~\ref{NikulinInvolution} and shown in Figure~\ref{fig:d12fib}.
We have
\beq
\begin{split}
{\color{green}\widetilde{A}_1}=   \langle L_3 ; R_1  \rangle \,, \quad
{\color{blue}\widetilde{A}_1} =   \langle R_2 ;  b_5 \rangle \,, \quad {\color{brown}\widetilde{D}_{12}}=  \langle a_2 , L_2 ,  \dots , L_1, b_1 ; b_3  \rangle \,,
\end{split}
\eeq
Thus, the smooth fiber class is given by
\beq
 \mathrm{F}_{\text{alt}} = a_2+2 a_3 + L_2 +2 a_4 + \dots + 2 a_9 +2  L_1 +b_1 +2 b_2+ b_3 \,,
\eeq
and the classes of a section and 2-torsion section are ${\color{red} a_1, b_4}$. 
\begin{figure}
\centering
\begin{equation*}
\scalemath{\MyScaleMed}{
\xymatrix @-0.9pc  {
& &  \color{green}{\stackrel{L_3}{\bullet}} \ar @{=}[rrrrrrrrrr]& &  & & &    & & & & & \color{green}{\stackrel{R_1}{\bullet}} &  &  \\ 
 & &  & &  & & & &   &    & &  & & \\
&  
 \color{red}{\stackrel{a_1}{\bullet}} \ar @{-} [r] \ar @{-}[ddr] \ar @{-}[uur] &
 \color{brown}{\stackrel{a_2}{\bullet}} \ar @{-} [r]  &
 \color{brown}{\stackrel{a_3}{\bullet}} \ar @{-} [r]  \ar @{-} [d] &
 \color{brown}{\stackrel{a_4}{\bullet}} \ar @{-} [r] &
 \color{brown}{\stackrel{a_5}{\bullet}} \ar @{-} [r] &
 \color{brown}{\stackrel{a_6}{\bullet}} \ar @{-} [r] &
 \color{brown}{\stackrel{a_7}{\bullet}} \ar @{-} [r] &
 \color{brown}{\stackrel{a_8}{\bullet}} \ar @{-} [r] &
 \color{brown}{\stackrel{a_9}{\bullet}} \ar @{-} [r] &
 \color{brown}{\stackrel{L_1}{\bullet}} \ar @{-} [r] &
 \color{brown}{\stackrel{b_2}{\bullet}} \ar @{-} [r] \ar @{-} [d] &
 \color{brown}{\stackrel{b_3}{\bullet}} & \ \color{red}{\stackrel{b_4}{\bullet}} \ar @{-} [l] \ar @{-}[ddl] \ar @{-}[uul]
 & \\
 & &  &  \color{brown}{\stackrel{L_2}{\bullet}} &  & & & &   &    & &  \color{brown}{\stackrel{b_1}{\bullet}}  & & \\
 & & \color{blue}{\stackrel{R_2}{\bullet}} \ar @{=}[rrrrrrrrrr]& &  & & &    & & & & &    \color{blue}{\stackrel{b_5}{\bullet}} & & \\
 } }
\end{equation*}
\caption{The \emph{alternate fibration} with a fiber of type $\widetilde{D}_{12}$}
\label{fig:d12fib}
\end{figure}%
Making the substitutions 
\beq
\label{eqn:substitution_alt}
\mathbf{X}= 2 u v x\,, \quad  \mathbf{Y}=  y \,, \quad \mathbf{Z} = 4 v^5 (-2 \varepsilon u+\zeta v) z\,, \quad \mathbf{W} = 2 v^2 x\,,
\eeq
into Equation~(\ref{mainquartic}), is compatible with  $L_1(u, v)=0$. Here, $L_1(u, v)=u \mathbf{W} - v \mathbf{X} =0$ for $[u:v] \in \mathbb{P}^1$ is the pencil of planes containing the line $L_1$. We obtain the Jacobian elliptic fibration  $\pi_{\text{alt}}:  \mathcal{X} \rightarrow \mathbb{P}^1$ with fiber $\mathcal{X}_{[u:v]}$, given by the equation 
\beq
\label{eqn:alt}
\mathcal{X}_{[u:v]}: \quad y^2 z  = x \Big(x^2 + A(u, v) \, x z +B(u, v) \, z^2 \Big) \,, 
\eeq
admitting the section $\sigma_{\text{alt}} : [x:y:z] = [0:1:0]$ and the 2-torsion section  $[x:y:z] = [0:0:1]$, and with the discriminant 
\beq
\Delta(u, v) =  B(u, v)^2 \, \Big(A(u, v)^2-4 B(u, v)\Big) \,,
\eeq
where
\beq
\label{eqn:pf_comparison}
\begin{split}
A(u, v) = 4 v (4 u^3-3\alpha  uv^2- \beta v^3) \,,\quad
B(u, v) = 4 v^6 (2\gamma u-\delta v)(2\varepsilon u -\zeta v)   \,.
\end{split}
\eeq
We have the following:
\begin{lemma}
\label{lem:fib_alt}
Equation~(\ref{eqn:alt}) defines a Jacobian elliptic fibration with singular fibers $6 I_1 + 2 I_2 + I_8^*$ and a Mordell-Weil group $\operatorname{MW}(\pi_{\text{alt}}, \sigma_{\text{alt}}) = \mathbb{Z}/2\mathbb{Z}$. The Nikulin involution in Proposition~\ref{NikulinInvolution} acts on the Jacobian elliptic fibration~(\ref{eqn:alt}) as a van~Geemen-Sarti  involution \cite{MR2274533}.
\end{lemma}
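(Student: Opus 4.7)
The plan has three parts: identify the singular fibers, compute the Mordell--Weil group, and verify that the Nikulin involution acts as a Van Geemen--Sarti involution.

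For the singular fibers, I would work one locus at a time on the discriminant $\Delta(u,v) = B^2(A^2 - 4B)$ and apply Tate's algorithm to $y^2 = x^3 + A x^2 + B x$, for which one computes $c_4 = 16A^2 - 48B$ and $c_6 = -64 A^3 + 288 A B$. At $v = 0$, $A$ vanishes to order $1$ and $B$ to order $6$, giving $v_0(c_4) = 2$, $v_0(c_6) = 3$, and $v_0(\Delta) = 12 + 2 = 14$, which is precisely Kodaira type $I_8^*$. At each of the two loci $2\gamma u - \delta v = 0$ and $2\varepsilon u - \zeta v = 0$, $B$ vanishes simply while $A$ does not, so $c_4$ is a unit and $v(\Delta) = 2$, yielding $I_2$. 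Since $A^2 - 4B$ has degree eight in $(u,v)$ and a double zero at $v = 0$ already accounted for, the remaining factor is a degree-six polynomial whose six zeros are generically simple and distinct from the other discriminant loci, producing six $I_1$ fibers. The Euler-number check $14 + 2\cdot 2 + 6 = 24 = e(\mathcal{X})$ confirms completeness.

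For the Mordell--Weil group, the point $[x:y:z]=[0:0:1]$ is manifestly a two-torsion section, since $x = 0$ is a zero of the cubic on the right of~(\ref{eqn:alt}). The Shioda--Tate formula applied with $\rho(\mathcal{X}) = 16$ and the reducible-fiber contributions $(13 - 1) + (2 - 1) + (2 - 1) = 14$ forces $\operatorname{rank}\operatorname{MW}(\pi_{\text{alt}},\sigma_{\text{alt}}) = 0$. Combining this with Lemma~\ref{prop:ADE}, which identifies the present fibration as the unique one with $\mathrm{K}^{\text{root}} = D_{12}(-1) \oplus A_1(-1) \oplus A_1(-1)$ and $\mathcal{W} = \mathbb{Z}/2\mathbb{Z}$, pins down $\operatorname{MW}(\pi_{\text{alt}},\sigma_{\text{alt}}) = \mathbb{Z}/2\mathbb{Z}$ with generator $[0:0:1]$.

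For the Van Geemen--Sarti claim, I would argue in three steps. First, $\Psi$ preserves the alternate fibration: the text already notes that the configuration in Figure~\ref{fig:d12fib} is invariant under $\Psi$, so the fiber class $\mathrm{F}_{\text{alt}}$ is fixed and $\Psi$ descends to an automorphism of the base $\mathbb{P}^1$. Second, any involution of a smooth elliptic fiber is translation by a two-torsion point composed with $\pm 1$; symplecticity of $\Psi$ (Proposition~\ref{NikulinInvolution}) excludes the elliptic involution, so $\Psi$ restricts to a translation on every smooth fiber. Third, pulling $\Psi$ back through the substitution $\mathbf{X} = 2uvx$, $\mathbf{Y} = y$, $\mathbf{Z} = 4v^5(-2\varepsilon u+\zeta v)z$, $\mathbf{W} = 2v^2 x$, one verifies that the zero section $[0:1:0]$ is sent to the two-torsion section $[0:0:1]$; hence $\Psi$ is translation by $[0:0:1]$, which is by definition a Van Geemen--Sarti involution in the sense of \cite{MR2274533}.

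The main obstacle is the last step: the projective formula for $\Psi$ in Proposition~\ref{NikulinInvolution} is indeterminate along several subvarieties and its components share common factors upon substitution, so the computation of $\Psi$ on the zero section must be carried out on the resolution (or in an appropriate affine chart, e.g.\ $\mathbf{W}=1$), clearing the common factor $2\gamma\mathbf{X}-\delta\mathbf{W}$ before taking the limit toward $[0:1:0]$.
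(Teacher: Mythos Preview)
Your treatment of the singular fibers and the Mordell--Weil group is correct and in fact more detailed than the paper's one-line ``the proof easily follows by checking the Kodaira type at $B=0$ and $A^2-4B=0$.''

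For the Van Geemen--Sarti claim the paper takes a more direct route than your three-step argument: it simply writes down the explicit fiberwise translation-by-two-torsion map
\[
[x:y:z]\ \longmapsto\ [\,B(u,v)\,xz\ :\ -B(u,v)\,yz\ :\ x^2\,],
\]
notes that it swaps $[0:1:0]\leftrightarrow[0:0:1]$, and identifies it with $\Psi$ after the given substitution. Your Step~2 has a small loose end: symplecticity excludes the fiberwise elliptic involution only once you know that $\Psi$ acts trivially on the base $\mathbb{P}^1$; otherwise a nontrivial base involution could compensate the sign on the holomorphic two-form. Invariance of the dual graph alone does not force $\bar\Psi=\mathrm{id}$, since the two $I_2$ fibers (and the six $I_1$ fibers) could in principle be permuted. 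The paper's explicit formula settles this at once, and so would the concrete computation you propose in Step~3; the point is that Steps~1--2 by themselves are not quite sufficient, and the explicit verification is doing the essential work either way.
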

\begin{proof}
The proof easily follows by checking the Kodaira type of the singular fibers at $B(u, v)=0$ and $A(u, v)^2-4 B(u, v)=0$. Applying the Nikulin involution in Proposition~\ref{NikulinInvolution}, we obtain the same configuration of  singular fibers, only with the roles of the section and the 2-torsion section interchanged. This means that the involution in Equation~(\ref{eq:NikulinInvolutions}) acts on the Jacobian elliptic fibration~(\ref{eqn:alt}) by fiberwise translation by 2-torsion, i.e., by mapping
\beq
 \Big[ x : y : z \Big] \mapsto \Big[ B(u, v) \, x z \  : - B(u, v) \, y z \ : \ x^2 \Big] 
\eeq 
for $[x:y:z] \not= [0:1:0], [0:0:1]$, and swapping $[0:1:0] \leftrightarrow [0:0:1]$.
\end{proof}
\subsection{The base-fiber dual fibration}
\label{ssec:bfd}
The K3 surfaces given by Equation~(\ref{eqn:alt}) are double covers of the Hirzebruch surface $\mathbb{F}_0=\mathbb{P}^1\times\mathbb{P}^1$ branched along a curve of type $(4,4)$, i.e., along a section in the line bundle $\mathcal{O}_{\mathbb{F}_0}(4,4)$. Every such cover has two natural elliptic fibrations corresponding to the two rulings of the quadric $\mathbb{F}_0$ coming from the two projections $\pi_i: \mathbb{F}_0 \to \mathbb{P}^1$ for $i=1,2$. The fibration $\pi_1$ is the alternate fibration from Section~\ref{ssec:alt}. The second elliptic fibration arises from the projection $\pi_2$ and has the reducible fibers of type $\widetilde{E}_8$ and $\widetilde{D}_6$.
\par There are exactly two ways of embedding the disjoint reducible fibers of type $\widetilde{E}_8$ and $\widetilde{D}_6$ into the diagram~(\ref{exdiagg77}). These two ways are depicted in Figure~\ref{fig:e8d6fib}. In the case of Figure~\ref{Fig3a}, we have
\beq
\begin{split}
{\color{green}\widetilde{E}_8}=   \langle a_1,  a_2,    a_3,   L_2,   a_4,  a_5  ,  a_6 ,  a_7 ; a_8 \rangle \,, \quad
{\color{blue}\widetilde{D}_6} =   \langle R_1, b_5,  b_4,  b_3, b_2, b_1 ; L_1 \rangle \, .
\end{split}
\eeq
Thus, the smooth fiber class is given by
\beq
 \mathrm{F}_{\text{bfd}}^{(a)} =  L_1 + b_1 + 2 b_2 + 2 b_3 + 2 b_4 + b_5 +  R_1  \,,
\eeq
and the class of a section is ${\color{red} a_9}$. 
\begin{figure}
\hspace*{-1.5cm}
\begin{subfigure}{.5\textwidth}  
\centering
\begin{equation*}
\scalemath{\MyScaleSmall}{
\xymatrix @-0.9pc  {
& & \stackrel{L_3}{\bullet} \ar @{=}[rrrrrrrrrr]& &  & & &    & & & & & \color{blue}{\stackrel{R_1}{\bullet}} &  &  \\ 
 & &  & &  & & & &   &    & &  & & \\
&  
 \color{green}{\stackrel{a_1}{\bullet}} \ar @{-} [r] \ar @{-}[ddr] \ar @{-}[uur] &
 \color{green}{\stackrel{a_2}{\bullet}} \ar @{-} [r]  &
 \color{green}{\stackrel{a_3}{\bullet}} \ar @{-} [r]  \ar @{-} [d] &
 \color{green}{\stackrel{a_4}{\bullet}} \ar @{-} [r] &
 \color{green}{\stackrel{a_5}{\bullet}} \ar @{-} [r] &
 \color{green}{\stackrel{a_6}{\bullet}} \ar @{-} [r] &
 \color{green}{\stackrel{a_7}{\bullet}} \ar @{-} [r] &
 \color{green}{\stackrel{a_8}{\bullet}} \ar @{-} [r] &
 \color{red}{\stackrel{a_9}{\bullet}} \ar @{-} [r] &
 \color{blue}{\stackrel{L_1}{\bullet}} \ar @{-} [r] &
 \color{blue}{\stackrel{b_2}{\bullet}} \ar @{-} [r] \ar @{-} [d] &
 \color{blue}{\stackrel{b_3}{\bullet}} & \ \color{blue}{\stackrel{b_4}{\bullet}} \ar @{-} [l] \ar @{-}[ddl] \ar @{-}[uul]
 & \\
 & &  &  \color{green}{\stackrel{L_2}{\bullet}} &  & & & &   &    & &  \color{blue}{\stackrel{b_1}{\bullet}}  & & \\
 & & \stackrel{R_2}{\bullet} \ar @{=}[rrrrrrrrrr]& &  & & &    & & & & &    \color{blue}{\stackrel{b_5}{\bullet}} & & \\
 } }
\end{equation*}
\caption{\phantom{A}}
\label{Fig3a}
\end{subfigure}%
\quad
\begin{subfigure}{.5\textwidth}
\centering%
\begin{equation*}
\scalemath{\MyScaleSmall}{
\xymatrix @-0.9pc  {
& & \color{blue}{\stackrel{L_3}{\bullet}} \ar @{=}[rrrrrrrrrr]& &  & & &    & & & & & \stackrel{R_1}{\bullet} &  &  \\ 
 & &  & &  & & & &   &    & &  & & \\
&  
 \color{blue}{\stackrel{a_1}{\bullet}} \ar @{-} [r] \ar @{-}[ddr] \ar @{-}[uur] &
 \color{blue}{\stackrel{a_2}{\bullet}} \ar @{-} [r]  &
 \color{blue}{\stackrel{a_3}{\bullet}} \ar @{-} [r]  \ar @{-} [d] &
 \color{blue}{\stackrel{a_4}{\bullet}} \ar @{-} [r] &
 \color{red}{\stackrel{a_5}{\bullet}} \ar @{-} [r] &
 \color{green}{\stackrel{a_6}{\bullet}} \ar @{-} [r] &
 \color{green}{\stackrel{a_7}{\bullet}} \ar @{-} [r] &
 \color{green}{\stackrel{a_8}{\bullet}} \ar @{-} [r] &
 \color{green}{\stackrel{a_9}{\bullet}} \ar @{-} [r] &
 \color{green}{\stackrel{L_1}{\bullet}} \ar @{-} [r] &
 \color{green}{\stackrel{b_2}{\bullet}} \ar @{-} [r] \ar @{-} [d] &
 \color{green}{\stackrel{b_3}{\bullet}} & \ \color{green}{\stackrel{b_4}{\bullet}} \ar @{-} [l] \ar @{-}[ddl] \ar @{-}[uul]
 & \\
 & &  &  \color{blue}{\stackrel{L_2}{\bullet}} &  & & & &   &    & &  \color{green}{\stackrel{b_1}{\bullet}}  & & \\
 & & \color{blue}{\stackrel{R_2}{\bullet}} \ar @{=}[rrrrrrrrrr]& &  & & &    & & & & &    \stackrel{b_5}{\bullet} & & \\
 } }
\end{equation*}
\caption{\phantom{B}}
\label{Fig3b}
\end{subfigure}%
\caption{The \emph{base-fiber dual fibrations} with fibers of type $\widetilde{E}_8$ and $\widetilde{D}_6$}
\label{fig:e8d6fib}
\end{figure}%
Making the substitutions 
\beq
\begin{split}
\mathbf{X}= 3 uv (x+6\gamma \varepsilon uv^3z)\,, \quad  \mathbf{Y}=  y \,, \\ 
\mathbf{Z} = 6 v^2 (\varepsilon x-6\gamma \varepsilon^2 uv^3z-18 \zeta u^2 v^2 z)\,, \quad \mathbf{W} = 108 u^3 v^3 z \,,
\end{split}
\eeq
into Equation~(\ref{mainquartic}), is compatible with $L_3(u, v)=0$. Here, $L_3(u, v)=u \mathbf{Z} - v (2 \varepsilon \mathbf{X} - \zeta \mathbf{W})=0$ for $[u:v] \in \mathbb{P}^1$ is the pencil of planes containing the line $L_3$. We obtain a Jacobian elliptic fibration  $\pi_{\mathrm{bfd}}: \mathcal{X} \to \mathbb{P}^1$ with fiber $\mathcal{X}_{[u:v]}$, given by
the equation
\beq
\label{eqn:bfd}
\mathcal{X}_{[u:v]}: \quad y^2 z = x^3 + F(u, v) \, x z^2 + G(u, v) \, z^3 \,,
\eeq
admitting the section $\sigma_{\text{bfd}} : [x:y:z] = [0:1:0]$, and with the discriminant 
\beq
\Delta(u, v) =  4  F^3 + 27  G^2 =  - 2^6 3^{12} u^8 v^{10}  P(u, v) \,,
\eeq
where
\beq
\begin{split}
F(u, v)  = & \; - 108\, u^2 v^4  \Big(9 \alpha  u^2 - 3 (\gamma \zeta+\delta \varepsilon)uv + \gamma^2 \varepsilon^2 v^2\Big) \,, \\
G(u, v) = & \; - 216 u^3 v^5 \Big( 27 u^4 + 54 \beta u^3 v + 27(\alpha \gamma \varepsilon+ \delta \zeta) u^2 v^2 \\
& \; - 9 \gamma \varepsilon (\gamma\zeta+\delta\varepsilon) u v^3 + 2 \gamma^3 \epsilon^3 v^4 \Big) \,,
\end{split}
\eeq
and $P(u, v) =   \gamma^2\varepsilon^2 (\gamma\zeta-\delta\varepsilon)^2 v^6 + O(u)$ is an irreducible homogeneous polynomial of degree six.
We have the following:
\begin{lemma}
\label{lem:fib_bfd}
Equation~(\ref{eqn:bfd}) defines a Jacobian elliptic fibration with the singular fibers $6 I_1 + I_2^* + II^*$ and a Mordell-Weil group $\operatorname{MW}(\pi_{\text{bfd}}, \sigma_{\text{bfd}}) =  \{\mathbb{I}\}$.
\end{lemma}
\begin{proof}
The proof easily follows by checking the Kodaira type of the singular fibers at $P(u, v)=0$, $u=0$, and $v=0$. This is done by checking the vanishing degrees of $(F, G, \Delta)$ at these points. These vanishing degrees are $(0,0,1)$ for an $I_1$-fiber, $(2,3,8)$ for an $I^*_2$-fiber, and $(4,5,10)$ for an $II^*$-fiber.
\end{proof}
\par Applying the Nikulin involution in Proposition~\ref{NikulinInvolution}, we obtain the fiber configuration in Figure~\ref{Fig3b} with
\beq
\begin{split}
{\color{blue}\widetilde{D}_6} =   \langle L_3,  R_2, a_1,  a_2,  a_3, L_2; a_4 \rangle \, , \quad
{\color{green}\widetilde{E}_8}=   \langle b_4,  b_3,    b_1,  b_2,  L_1,   a_9,  a_8  ,  a_7 ; a_6 \rangle \,.
\end{split}
\eeq
The smooth fiber class is given by
\beq
 \mathrm{F}^{(b)}_{\text{bfd}} =  R_2 +  L_2 +  L_3 + 2 a_1 + 2 a_2 + 2 a_3 +  a_4  \,,
 \eeq
and the class of the section is ${\color{red} a_5}$. Using the polarizing divisor $\mathcal{H}$ in Equation~(\ref{linepolariz}), one checks that
\beq
\begin{split}
& 2 \mathcal{H} -  \mathrm{F}^{(b)}_{\text{bfd}} - L_1  - 2L_2  \\
\equiv \; & 2 a_1 + 4 a_2 + 6 a_3 + 6 a_4 + 6  a_5 + 5 a_6 + 4 a_7 + 3 a_8 + 2 a_9 + b_1 + 2 b_2 + \dots + 2 b_5\,,
\end{split} 
\eeq
which shows that the base-fiber dual fibration is also induced by intersecting the quartic with the pencil of quadratic surfaces, denoted by $C_2(u, v)=0$ with $[u:v] \in \mathbb{P}^1$, containing the lines $L_1, L_2$ and also being tangent to $L_2$. A computation yields
\beq
 C_2(u, v) = v \mathbf{Z} \big( 2 \gamma \mathbf{X}- \delta \mathbf{W} \big)- u \mathbf{W}^2 =0 \,.
\eeq
\subsection{The maximal fibration}
\label{ssec:max}
\par There are exactly two ways of embedding one reducible ADE-type fiber of the biggest possible rank, namely a fiber of type $\widetilde{D}_{14}$, into the diagram~(\ref{exdiagg77}). These two ways are depicted in Figure~\ref{fig:d14fib}. In the case of Figure~\ref{Fig4a}, we have
\beq
\begin{split}
{\color{brown}\widetilde{D}_{14}} =   \langle R_2, L_3 ,  a_1,  \dots , a_9, L_1, b_2, b_1 ;  b_3 \rangle \, .
\end{split}
\eeq
Thus, the smooth fiber class is given by
\beq
 \mathrm{F}_{\text{max}}^{(a)} =  R_2 + L_3 + 2 a_1 + \dots+ 2 a_9 + 2  L_1 + 2 b_2 + b_1 + b_3 \,,
\eeq
and the class of a section is ${\color{red} b_4}$. Using the polarizing divisor $\mathcal{H}$ in Equation~(\ref{linepolariz}), one checks that
\beq
 2 \mathcal{H} -  \mathrm{F}_{\text{max}}^{(a)} - R_1 \equiv b_1 + 2 b_2 + 3 b_3 + 4 b_4 + 3 b_5    \,.
\eeq
This shows that the elliptic fibration with section is induced by intersecting the quartic surface $\mathcal{Q}(\alpha, \beta, \gamma, \delta , \varepsilon, \zeta)$ with a special pencil of quadric surfaces containing the curve $R_1$, denoted by $C_3(u, v)=0$ with $[u:v] \in \mathbb{P}^1$. Because of the identity
\beq
 3 \mathcal{H} -  \mathrm{F}_{\text{max}}^{(a)} - R_1 - R_2 - L_1 \equiv a_1 + \dots + a_9 +   2 b_1 + 4 b_2 + 5 b_3 + 6 b_4 + 5 b_5  \,,
\eeq
the quadric surfaces must also define a pencil of reducible cubic surfaces containing the curves $L_1$, $R_1$, and $R_2$. This pencil turns out to be $(2 \gamma \mathbf{X} - \delta \mathbf{W}) C_3(u, v)=0$ with
\beq 
\label{eq:conic}
\begin{split}
C_3(u, v)= v \Big( 2 \gamma^2 \delta \varepsilon \zeta \mathbf{X} \mathbf{Z} + (6\alpha\gamma\delta\varepsilon\zeta +4 \beta \gamma \delta \varepsilon^2  + 4 \beta \gamma^2 \varepsilon \zeta+2 \delta^2 \zeta^2) \mathbf{X} \mathbf{W} - \gamma\delta^2\varepsilon \zeta \mathbf{Z} \mathbf{W} \\
+ 2 \gamma \delta \varepsilon \zeta \mathbf{Y}^2  - (8 \beta \gamma^2 \varepsilon^2 + 4 \delta^2 \varepsilon \zeta + 4 \gamma \delta \zeta^2) \mathbf{X}^2 \Big) 
+ u (2 \gamma \mathbf{X} - \delta \mathbf{W})(2 \varepsilon \mathbf{X} - \zeta \mathbf{W}) =0 \,.
\end{split}
\eeq
\begin{figure}
\hspace*{-1.5cm}
\begin{subfigure}{.5\textwidth}  
\centering
\begin{equation*}
\scalemath{\MyScaleSmall}{
\xymatrix @-0.9pc  {
& & \color{brown}{\stackrel{L_3}{\bullet}} \ar @{=}[rrrrrrrrrr]& &  & & &    & & & & & \stackrel{R_1}{\bullet} &  &  \\ 
 & &  & &  & & & &   &    & &  & & \\
&  
 \color{brown}{\stackrel{a_1}{\bullet}} \ar @{-} [r] \ar @{-}[ddr] \ar @{-}[uur] &
 \color{brown}{\stackrel{a_2}{\bullet}} \ar @{-} [r]  &
 \color{brown}{\stackrel{a_3}{\bullet}} \ar @{-} [r]  \ar @{-} [d] &
 \color{brown}{\stackrel{a_4}{\bullet}} \ar @{-} [r] &
 \color{brown}{\stackrel{a_5}{\bullet}} \ar @{-} [r] &
 \color{brown}{\stackrel{a_6}{\bullet}} \ar @{-} [r] &
 \color{brown}{\stackrel{a_7}{\bullet}} \ar @{-} [r] &
 \color{brown}{\stackrel{a_8}{\bullet}} \ar @{-} [r] &
 \color{brown}{\stackrel{a_9}{\bullet}} \ar @{-} [r] &
 \color{brown}{\stackrel{L_1}{\bullet}} \ar @{-} [r] &
 \color{brown}{\stackrel{b_2}{\bullet}} \ar @{-} [r] \ar @{-} [d] &
 \color{brown}{\stackrel{b_3}{\bullet}} & \ \color{red}{\stackrel{b_4}{\bullet}} \ar @{-} [l] \ar @{-}[ddl] \ar @{-}[uul]
 & \\
 & &  &  \stackrel{L_2}{\bullet} &  & & & &   &    & &  \color{brown}{\stackrel{b_1}{\bullet}}  & & \\
 & &  \color{brown}{\stackrel{R_2}{\bullet}} \ar @{=}[rrrrrrrrrr]& &  & & &    & & & & &    \stackrel{b_5}{\bullet} & & \\
 } }
\end{equation*}
\caption{\phantom{A}}
\label{Fig4a}
\end{subfigure}%
\quad
\begin{subfigure}{.5\textwidth}
\centering%
\begin{equation*}
\scalemath{\MyScaleSmall}{
\xymatrix @-0.9pc  {
& & \stackrel{L_3}{\bullet} \ar @{=}[rrrrrrrrrr]& &  & & &    & & & & & \color{brown}{\stackrel{R_1}{\bullet}} &  &  \\ 
 & &  & &  & & & &   &    & &  & & \\
&  
 \color{red}{\stackrel{a_1}{\bullet}} \ar @{-} [r] \ar @{-}[ddr] \ar @{-}[uur] &
 \color{brown}{\stackrel{a_2}{\bullet}} \ar @{-} [r]  &
 \color{brown}{\stackrel{a_3}{\bullet}} \ar @{-} [r]  \ar @{-} [d] &
 \color{brown}{\stackrel{a_4}{\bullet}} \ar @{-} [r] &
 \color{brown}{\stackrel{a_5}{\bullet}} \ar @{-} [r] &
 \color{brown}{\stackrel{a_6}{\bullet}} \ar @{-} [r] &
 \color{brown}{\stackrel{a_7}{\bullet}} \ar @{-} [r] &
 \color{brown}{\stackrel{a_8}{\bullet}} \ar @{-} [r] &
 \color{brown}{\stackrel{a_9}{\bullet}} \ar @{-} [r] &
 \color{brown}{\stackrel{L_1}{\bullet}} \ar @{-} [r] &
 \color{brown}{\stackrel{b_2}{\bullet}} \ar @{-} [r] \ar @{-} [d] &
 \color{brown}{\stackrel{b_3}{\bullet}} & \ \color{brown}{\stackrel{b_4}{\bullet}} \ar @{-} [l] \ar @{-}[ddl] \ar @{-}[uul]
 & \\
 & &  &  \color{brown}{\stackrel{L_2}{\bullet}} &  & & & &   &    & &  \stackrel{b_1}{\bullet}  & & \\
 & & \stackrel{R_2}{\bullet} \ar @{=}[rrrrrrrrrr]& &  & & &    & & & & &    \color{brown}{\stackrel{b_5}{\bullet}} & & \\
 } }
\end{equation*}
\caption{\phantom{B}}
\label{Fig4b}
\end{subfigure}%
\caption{The \emph{maximal fibrations} with a fiber of type $\widetilde{D}_{14}$}
\label{fig:d14fib}
\end{figure}%
\par Making the substitutions 
\beq
\begin{split}
\mathbf{X}= \delta\zeta v \big((2\beta \gamma \varepsilon v-u)x -2 \gamma \delta^5 \varepsilon \zeta^5 v^5 z\big)\,, \quad  \mathbf{Y}=  y \,, \quad \mathbf{W} =  2 \delta^2\zeta^2 v^2 x \,,
\end{split}
\eeq
and $\mathbf{Z}=\mathbf{Z}(x, y, z, u, v)$, obtained by solving Equation~(\ref{eq:conic}) for $\mathbf{Z}$, determines a Jacobian elliptic fibration  $\pi_{\mathrm{max}}: \mathcal{X} \to \mathbb{P}^1$ with fiber $\mathcal{X}_{[u:v]}$, given by the equation
\beq
\label{eqn:max}
\mathcal{X}_{[u:v]}: \quad y^2 z = x^3 + a(u, v) \, x^2 z + b(u, v) \, x z^2 + c(u, v) \, z^3 \,,
\eeq
admitting the section $\sigma_{\text{max}} : [x:y:z] = [0:1:0]$, and with the discriminant 
\beq
\begin{split}
\Delta(u, v) & =  b^2 \big(a^2 - 4 b\big) - 2 a c  \big(2 a^2-9 b \big) -27 c^2  = 64 \delta^{16} \zeta^{16} v^{16} d(u, v)\,,
\end{split}
\eeq
where
\beq
\begin{split}
a(u, v)  & = - 2 \delta \zeta v \Big( u^3 - 6 \beta \gamma \varepsilon u^2 v + 3(4 \beta^2\gamma^2 \varepsilon^2- \alpha\delta^2\zeta^2) uv^2\\
& \quad - 2 \beta ( 4 \beta^2\gamma^3 \varepsilon^3 - 3 \alpha\gamma \delta^2 \varepsilon\zeta^2-\delta^3\zeta^3) v^3 \Big)\,, \\
b(u, v)  & = - 4 \delta^6 \zeta^6 v^6 \Big( 2 \gamma \varepsilon u^2 - (8 \beta \gamma^2 \varepsilon^2+ \gamma\delta\zeta^2+\delta^2\varepsilon\zeta) uv \\
& \quad +(8 \beta^2 \gamma^3 \varepsilon^3-3 \alpha\gamma\delta^2\varepsilon\zeta^2 +2  \beta \gamma^2 \delta \varepsilon \zeta^2 + 2 \beta\gamma \delta^2\varepsilon^2\zeta -\delta^3\zeta^3)v^2 \Big) \,, \\
c(u, v) &= - 8 \gamma \delta^{11} \varepsilon \zeta^{11} v^{11} \Big( \gamma \varepsilon u - (2 \beta \gamma^2 \varepsilon^2 + \gamma\delta \zeta^2+\delta^2\varepsilon\zeta)v\Big) \,,
\end{split}
\eeq
and $d(u, v) =  (\gamma\zeta - \delta\varepsilon)^2 u^8 + O(v)$ is an irreducible homogeneous polynomial of degree eight.
We have the following:
\begin{lemma}
\label{lem:fib_max}
Equation~(\ref{eqn:max}) defines a Jacobian elliptic fibration with the singular fibers $8 I_1 + I_{10}^*$ and a Mordell-Weil group $\operatorname{MW}(\pi_{\text{max}}, \sigma_{\text{max}}) =  \{\mathbb{I}\}$.
\end{lemma}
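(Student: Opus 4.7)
The plan is to mimic the proofs of Lemmas~\ref{lem:fib_std}, \ref{lem:fib_alt}, and \ref{lem:fib_bfd}: read off the Kodaira types of the singular fibers directly from the explicit discriminant factorization via Tate's algorithm, and then pin down the Mordell-Weil group using the lattice-theoretic classification in Lemma~\ref{prop:ADE}.

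First, I would locate the singular fibers. Since $\Delta(u,v) = 64\,\delta^{16}\zeta^{16}\,v^{16}\,d(u,v)$ with $d(u,v)$ irreducible of degree eight for generic parameters, the only singular fibers sit over the eight distinct zeros of $d(u,v)$ and over $v = 0$. At each zero of $d$, the discriminant vanishes simply and the Weierstrass coefficients $a,b,c$ do not vanish simultaneously there, so Tate's algorithm immediately yields Kodaira type $I_1$; this accounts for the eight $I_1$ fibers.

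Second, I would run Tate's algorithm at $v = 0$. The orders of vanishing read off from the explicit formulas are $\operatorname{ord}_v(a) = 1$, $\operatorname{ord}_v(b) = 6$, $\operatorname{ord}_v(c) = 11$, and $\operatorname{ord}_v(\Delta) = 16$. Rewriting Equation~(\ref{eqn:max}) as an extended Weierstrass form with $a_1 = a_3 = 0$, $a_2 = a$, $a_4 = b$, $a_6 = c$, these orders meet the Tate inequalities required for a fiber of type $I_n^*$, with $n = \operatorname{ord}_v(\Delta) - 6 = 10$; hence the fiber over $v = 0$ is of Kodaira type $I_{10}^*$ (ADE type $D_{14}$). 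As a consistency check, the Euler-number balance $8 \cdot 1 + 16 = 24 = \chi(\mathcal{X})$ confirms that no further singular fibers were overlooked.

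Finally, the Mordell-Weil group is read off from Lemma~\ref{prop:ADE}: the only entry in that classification whose root sublattice has ADE-type $D_{14}$ is $\mathrm{K} = D_{14}(-1)$ with $\mathcal{W} = \{\mathbb{I}\}$, so $\operatorname{MW}(\pi_{\text{max}}, \sigma_{\text{max}}) = \{\mathbb{I}\}$. The main point requiring care is the Tate's algorithm step at $v = 0$, specifically verifying that no admissible coordinate shift can reduce the effective orders of vanishing of the $a_i$'s below the thresholds needed to remain on the $I_{10}^*$ branch of Tate's flowchart (as opposed to branching off to an $IV^*$, $III^*$, or $II^*$ type); however, this is essentially forced by the fact that $\operatorname{ord}_v(\Delta) = 16$ is already consumed by $I_{10}^*$, combined with the Euler-number bookkeeping above.
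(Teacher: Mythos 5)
Your proposal is correct and follows essentially the same route as the paper, whose proof is simply to check the Kodaira types of the fibers over $d(u,v)=0$ and $v=0$; your Tate's-algorithm computation at $v=0$ (orders $1,6,11$ for $a,b,c$ and $16$ for $\Delta$, giving $I_{10}^*$), the Euler-number check, and the appeal to Lemma~\ref{prop:ADE} for the trivial Mordell--Weil group just make explicit what the paper leaves implicit.
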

\begin{proof}
The proof easily follows by checking the Kodaira type of the singular fibers at $d(u, v)=0$ and $v=0$.
\end{proof}
\par Applying the Nikulin involution in Proposition~\ref{NikulinInvolution}, we obtain the fiber configuration in Figure~\ref{Fig4b} with
\beq
\begin{split}
{\color{brown}\widetilde{D}_{14}} =   \langle b_5, R_1, b_4, b_3, b_ 2, L_1 ,  a_9,  \dots , a_3, L_2 ;  a_2 \rangle \, .
\end{split}
\eeq
The smooth fiber class is given by
\beq
 \mathrm{F}_{\text{max}}^{(b)} =  R_1 + L_2 + 2  L_1 + a_2 + 2a_3 + \dots + 2a_9 + 2 b_2 + 2 b_3 + 2 b_4 + b_5 \,,
 \eeq
and the class of the section is ${\color{red} a_1}$. Using the polarizing divisor $\mathcal{H}$ in Equation~(\ref{linepolariz}), one checks that the elliptic fibration is also induced by intersecting the quartic surface $\mathcal{Q}(\alpha, \beta, \gamma, \delta , \varepsilon, \zeta)$ with a special pencil of cubic surfaces containing the curves $L_2$, $L_3$, $R_2$, denoted by $T(u, v)=0$ with $[u:v] \in \mathbb{P}^1$. This pencil of cubic surfaces is
\beq
 T(u, v) = C_3(u, v)\mathbf{Z}  - \gamma \delta\varepsilon \zeta \Big( C_2(u, v)  \mathbf{Z} - L_3(u, v) \mathbf{W}^2 \Big) \,.
\eeq
\bibliographystyle{amsplain}
\bibliography{references}{}
\end{document}